\RequirePackage{fix-cm}
\documentclass[envcountsect]{svjour3}                    

\smartqed  

\makeatletter
\def\@thmcountersep{.}
\makeatother

\usepackage{graphicx}%
\usepackage{amsmath,amssymb,amsfonts}%
\usepackage{mathrsfs}%

\DeclareMathOperator{\dist}{dist}
\DeclareMathOperator{\diam}{diam}
\def \mx {\mathbf{x}}
\def \my {\mathbf{y}}
\def \mz {\mathbf{z}}
\def \me {\mathbf{e}}

\begin{document}

\title{ Marcinkiewicz-Zygmund inequalities for hemispherical $t$-designs 
}

\titlerunning{Hemispherical $t$-designs and Marcinkiewicz-Zygmund inequalities}        

\author{Chao Li\textsuperscript{1}     \and
       Xiaojun Chen\textsuperscript{1}
}
\authorrunning{C. Li and X. Chen} 
\institute{ \textsuperscript{1}Department of Applied Mathematics, The Hong Kong Polytechnic University, Hong Kong, China.
lichao.li@polyu.edu.hk,
maxjchen@polyu.edu.hk
}

\date{}

\maketitle

\begin{abstract}

In this paper, we  introduce a new set of $\mathbb{L}_2$-orthogonal polynomials on the hemisphere $\mathbb{S}^d_+:=\{\mx\in\mathbb{R}^{d+1}:\|\mx\|=1, \mx\cdot\me_{d+1}\geq0\}$
and  establish $\mathbb{L}_1$ Marcinkiewicz-Zygmund inequalities on the hemisphere for even and odd spherical polynomials.
Hemispherical $t$-designs provide equal weight cubature rules on the hemisphere which are exact for polynomials up to degree $t$.
We propose variational characterizations of hemispherical $t$-designs by the $\mathbb{L}_2$-orthogonal polynomials.
Moreover, based on the Marcinkiewicz-Zygmund inequalities and the $\mathbb{L}_2$-orthogonal polynomials,  we  prove that for each $n\geq C_d t^d$, there exists a hemispherical $t$-design on the hemisphere with $n$ points, where $C_d$ is a constant depending only on $d$.
\keywords{spherical $t$-designs \and hemisphere \and Marcinkiewicz-Zygmund inequality \and even spherical polynomials \and cubature formulas}
\subclass{65D30 and 65D32}
\end{abstract}

\section{Introduction}
Let $\mathbb{S}^d:=\{\mx\in\mathbb{R}^{d+1}:\|\mx\|=1\}$ be the unit sphere in $\mathbb{R}^{d+1}$, where $\|\cdot\|$ is the Euclidean norm.
The problem of distributing a  number of points on the unit sphere  $\mathbb{S}^d$ has long  inspired  mathematical researchers, biologists, physicists and others.
Saff and  Kuijlaars \cite{saff1997} discussed in a general framework how to construct point sets on the unit sphere, their work has motivated and initiated a fruitful direction of research, see for example \cite{bannai2009,bondarenko2015,boyvalenkov2026,boyvalenkov2016,brauchart2015,sloan2026qmc} and references therein.
Distributing points on subsets of the sphere has also attracted attention in recent years and has many applications in sampling and approximation  on spherical caps \cite{feng2024,hesse2021} and spherical polygons\cite{cavoretto2026},
and geophysics  \cite{baratchart2017}.

Marcinkiewicz-Zygmund inequalities  have important applications of distributing point sets on the sphere for positive cubature formulas \cite{bondarenko2013,mhaskar2001,narcowich2006}.
Dai and Wang \cite{dai2010} proved the  Marcinkiewicz-Zygmund inequalities on spherical caps by introducing a new metric, which can be used to construct cubature formulas on spherical caps  as well as spherical collars.
In this work, we prove the classical $\mathbb{L}_1$ Marcinkiewicz-Zygmund inequalities \cite{mhaskar2001} on the hemisphere for even and odd spherical polynomials,  and show their application in establishing asymptotic bounds for hemispherical $t$-designs, that is, point sets on the hemisphere which provide equal weight cubature rules with polynomial exactness $t$.

Let $S\subset\mathbb{R}^{d+1}$ be a path-connect  topological space provided with a measure $\sigma$ that is finite and positive with full support, and let
$g_1,\ldots,g_m:S\rightarrow\mathbb{R}^p$ be continuous, integrable functions.
 An averaging set for $g_1,\ldots,g_m$ is a finite set $\mathcal{X}_n:=\{\mx_1,\ldots,\mx_n\}\subseteq S$ having the property
\begin{equation*}\label{gg}
 \frac{1}{\sigma(S)}\int_{S}g_j(\mathbf{x})d\sigma(\mx)=\frac{1}{n}\sum_{i=1}^ng_j(\mathbf{x}_i),\quad \mathrm{for}\ j=1,\ldots,m.
\end{equation*}
Seymour and Zaslavsky \cite{seymour1984} proved the existence of averaging sets which is stated as follows.
\begin{theorem}[Seymour and Zaslavsky \cite{seymour1984}]\label{SZ}
  Given $S$, $\sigma$, and $g_1,\ldots,g_m$ as described above, there exist averaging sets $\mathcal{X}_n$. The size of $\mathcal{X}_n$ may be any number, with a finite number of exceptions.
  And $\mathcal{X}_n$ may be chosen so that the vectors $(g_1(\mx_i),\ldots,g_m(\mx_i))^\top$ for all  $\mx_i\in \mathcal{X}_n$ are all distinct.
\end{theorem}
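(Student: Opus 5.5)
We reduce to a finite-dimensional convexity statement and then combine it with path-connectedness. Let $G:S\to\mathbb{R}^{N}$, $N=mp$, be the continuous map $G(\mx)=(g_1(\mx),\ldots,g_m(\mx))$. Let $\mathbf{c}=\frac{1}{\sigma(S)}\int_S G\,d\sigma$ be the target vector, and put $K=G(S)$, a path-connected subset of $\mathbb{R}^N$. After translating by $\mathbf{c}$ and restricting to the affine hull $A$ of $K$ (of dimension $k\le N$), the theorem asks for $n$ points of $K$ whose barycenter is the origin, which lies in $A$.

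\textbf{Step 1: the origin is interior.} Suppose the origin were not in the relative interior of $\operatorname{conv}K$ inside $A$. Then there would be a nonzero linear functional $\ell$ on $A$ with $\ell\ge0$ on $K$. Since $\int_S \ell\circ G\,d\sigma=0$, $\sigma$ has full support and $\ell\circ G$ is continuous, we would get $\ell\circ G\equiv0$. That would put $K$ in a proper affine subspace of $A$, a contradiction. So the origin is interior to $\operatorname{conv}K$ relative to $A$.

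\textbf{Step 2: a fixed-size simplex.} By Carathéodory and Step 1, we can pick points $\mathbf{y}_0,\ldots,\mathbf{y}_k\in K$ spanning $A$ such that the origin is a strictly positive convex combination $\sum\lambda_j\mathbf{y}_j$. Perturbing within the open set of such configurations, we may assume the $\lambda_j$ are rational, say $\lambda_j=a_j/M$ with positive integers $a_j$. Taking $a_j$ copies of a preimage of $\mathbf{y}_j$ then gives an averaging multiset of size $M$, and any multiple of $M$ also works. The main obstacle is to obtain \emph{every} sufficiently large $n$, together with distinctness.

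\textbf{Step 3: all large $n$, using path-connectedness.} This is the Seymour--Zaslavsky topological lemma, and I would handle it as follows. Fix $n$ large. Choose $n$ points $\mx_1,\ldots,\mx_n$ arranged so that their $G$-images cluster near the $\mathbf{y}_j$ with counts $b_j\approx n\lambda_j$. The barycenter is then within $O(1/n)$ of the origin. We now need a continuous family of $n$-point configurations whose barycenters sweep a small neighborhood of the origin. Use paths in $S$ between preimages of the $\mathbf{y}_j$: moving one point along a path from a preimage of $\mathbf{y}_j$ to a preimage of $\mathbf{y}_{j'}$ changes the barycenter continuously by $(\mathbf{y}_{j'}-\mathbf{y}_j)/n$. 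Using $k$ independently moving points, one for each direction of a basis of $A$, gives a continuous map from a cube $[0,1]^k$ into $A$. Its boundary faces lie on opposite sides of the origin, because the counts $b_j$ leave slack on both sides of $O(1/n)$. A degree or Poincaré--Miranda argument (Brouwer) then shows the origin is attained. The delicate point is checking this sign condition on the faces. It works once $n\ge n_0$, with $n_0$ depending on $\min\lambda_j$ and on the geometry of the simplex, which yields only finitely many exceptions.

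\textbf{Step 4: distinctness.} For distinct $G$-values, replace each repeated point $\mathbf{y}_j$ by a cluster of nearby distinct values of $G$. If $G$ is locally constant near every preimage, then path-connectedness still allows an infinite family of distinct values close to $\mathbf{y}_j$, unless $K$ is a single point, which is trivial. Then rerun the Step 3 correction, which is robust under small perturbations. Choosing distinct perturbations generically keeps all $n$ vectors distinct, since coincidences form a nowhere-dense condition.
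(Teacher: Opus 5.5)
The paper gives no proof of this theorem. It is quoted from Seymour and Zaslavsky \cite{seymour1984} and used only to motivate the existence of hemispherical designs for large $n$, so your argument has to stand on its own, and it does not. Step~1 is correct: proper separation from $\overline{\operatorname{conv}}K$, continuity of $\ell\circ G$ and full support of $\sigma$ give the contradiction.

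The genuine gap is Step~3, which is the whole content of the Seymour--Zaslavsky theorem. You assert it rather than prove it, and the mechanism you describe does not work. When a point moves along a path $\alpha$ in $S$, the barycenter moves along $\frac1n\bigl(G(\alpha(\tau))-G(\alpha(0))\bigr)$. Here $G\circ\alpha$ is an arbitrary continuous curve in $K$: only its \emph{endpoint} displacement equals $\mathbf{y}_{j'}-\mathbf{y}_j$, and its excursions are bounded only by its diameter. So on the face $\tau_i=1$ of your cube, the $i$-th coordinate is a controlled term of order $1/n$ plus cross terms from the other $k-1$ moving points plus the rounding error, all of the same order $1/n$. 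Increasing the number of moving points (the only use one could make of the ``slack'' in the $b_j$) scales the useful term and the cross terms by the same factor. Hence the Poincar\'e--Miranda sign condition is never forced.

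Concretely, nothing in your construction excludes $k=2$ with $G\circ\alpha_1(\tau)=\mathbf{y}_0+\tau\mathbf{u}_1-10\sin(\pi\tau)\mathbf{u}_2$ and $G\circ\alpha_2(\tau)=\mathbf{y}_0+\tau\mathbf{u}_2$; for instance $K$ may be an arc of this shape. Writing $\Phi$ for the barycenter, the map $n(\Phi(\tau)-\Phi(0))=\tau_1\mathbf{u}_1+(\tau_2-10\sin\pi\tau_1)\mathbf{u}_2$ sends the square homeomorphically onto a curved band. If $-n\Phi(0)=\frac12(\mathbf{u}_1+\mathbf{u}_2)$, which is exactly the balanced situation your slack argument aims at, the origin is not attained.

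What is missing is a lemma that corrects an $O(1/n)$ error \emph{exactly} using only continuous, possibly wild, curves in $K$. For the polynomial $g_j$ on $\mathbb{S}^d_+$ that the paper actually needs, the existence part can plausibly be rescued another way: move $r$ synchronized copies along short smooth curves whose $G$-velocities span the direction space of $A$, and apply the inverse function theorem with $r$ large relative to the rounding constant. For merely continuous $g_j$ on a path-connected space, a genuinely topological substitute is required.

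There are also three secondary problems.

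\emph{Step 2.} The claim that the origin is a strictly positive combination of $k+1$ affinely independent points of $K$ is false even for path-connected $K$. Take $S=K=([-1,1]\times\{0\})\cup(\{0\}\times[-1,1])$ with arc length and $G=\mathrm{id}$. Then $\mathbf{c}=0$ is interior to $\operatorname{conv}K$. Yet every triangle with vertices in $K$ has two vertices on the same axis, and hence contains the origin on its boundary or not at all. This is repairable: Steinitz gives at most $2k$ points of $K$ with the origin interior to their hull, hence a strictly positive combination of all of them. Your rationality claim remains unsupported, since for fixed points the weights may all be irrational and perturbing within $K$ is exactly what Step~3 fails to control; fortunately Step~3 does not need it.

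\emph{Step 4.} A solution produced by a Brouwer or degree argument is not generic. So ``coincidences are nowhere dense'' does not prevent a moving point from landing on the $G$-value of a fixed cluster point, since the path images may cover all of $K$. Distinctness has to be built into the correction step itself.

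\emph{The constant case.} If $K$ is a single point, the distinctness clause is not trivial but false for $n\ge 2$. That clause tacitly requires $G$ to be nonconstant, which does hold in the paper's application.
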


Let $\mathbb{P}_t$ be the space of spherical polynomials of degree at most $t$.
If we take $S=\mathbb{S}^d$ and $\{g_1,\ldots,g_m\}$ to be the set of spherical harmonics of degree at most $t$ on $\mathbb{S}^{d}$,
then the averaging set $\mathcal{X}_n:=\{\mathbf{x}_1,\ldots,\mathbf{x}_n\}\subseteq \mathbb{S}^d$ is called a \emph{spherical $t$-design}, that is, $$\frac{1}{|\mathbb{S}^d|}\int_{\mathbb{S}^d}P(\mathbf{x})d\omega_d(\mathbf{x})=\frac{1}{n}\sum_{i=1}^{n}P(\mathbf{x}_i),\quad \forall P\in\mathbb{P}_{t}.$$
where $d\omega_d(\mx)$ is the surface measure on $\mathbb{S}^{d}$ and $|\mathbb{S}^{d}|$ is the surface measure of the unit sphere.
The concept of spherical $t$-designs was introduced by Delsarte, Goethals and Seidel \cite{delsarte1977} in 1977.
And Theorem \ref{SZ}  gives the existence of spherical $t$-designs.
The exploration of the value of $n$ has persistently garnered significant attention.
Bannai and Damerell \cite{eiichi1979,eiichi1980} studied  spherical $t$-designs  for  $d\geq2$  with specific $t$.
For $d=2$, Hardin and Sloane \cite{hardin1996}  conjectured that $n\leq \frac{1}{2}t^2(1+o(1))$ as $t\rightarrow\infty$ and gave numerical evidence in support of this conjecture.
Chen and  Womersley \cite{chen2006},
Chen, Frommer, and Lang \cite{chen2011} proved that spherical $t$-design exists with $n=(t+1)^2$   for $t\leq 100$ on $\mathbb{S}^2$ by the Brouwer fixed point theorem and solving a system of nonlinear equations using the interval method.
Bondarenko and Viazovska \cite{bondarenko2010} showed that for each $n\geq c_d t^{\frac{2d(d+1)}{d+2}}$, there exists a spherical $t$-design on $\mathbb{S}^d$,   and
Bondarenko, Radchenko and Viazovska \cite{bondarenko2013} proved that for each $n\geq c_dt^d$, spherical $t$-designs with $n$ points exist,  where $c_d$ is a constant depending only on $d$.

In recent years, integration rules on subset of the sphere have attracted growing attention.
Mhaskar \cite{mhaskar2004,mhaskar2004local} proved that  positive weight cubature rules  on  spherical collars exist. For $d=2$,
Dai and Wang \cite{dai2010}  showed the existence of   positive weight cubature rules of precision $t$  with $O(t^2)$ points  on spherical zones under certain conditions.
Then, Hesse and Womersley \cite{hesse2012} presented  spherical cap $t$-designs with $O(t^3)$ points.
In \cite{chen2024}, we introduced a set of points named spherical cap $t$-subdesign induced by a spherical $t$-design for numerical integration of zonal spherical polynomials.
More recently, we \cite{chen2025} constructed spherical zone $t$-designs with $O(t^3)$ points.

Let  $\mathbb{S}_+^d:=\{\mx\in\mathbb{S}^d:\mx\cdot\me_{d+1}\geq0\}$ be the upper hemisphere, where $\me_{d+1}=(0,\ldots,0,1)^\top\in\mathbb{R}^{d+1}$ is the north pole of the sphere.
A finite set $\mathcal{X}_n=\{\mx_1,\ldots,\mx_n\}\subseteq \mathbb{S}^d_+$ is called a \emph{hemispherical $t$-design} if it holds that
\begin{equation}\label{eqw}
\frac{1}{|\mathbb{S}^d_+|}\int_{\mathbb{S}^d_+}P(\mx)d\omega_d(\mx)=\frac{1}{n}\sum_{i=1}^{n}P(\mx_i), \quad \forall P\in\mathbb{P}_{t},
\end{equation}
where  $|\mathbb{S}^d_+|$ is the surface area of the hemisphere.
Taking $S=\mathbb{S}^d_+$ and $\{g_1,\ldots,g_m\}$ to be a set of orthogonal polynomials of degree at most $t$ on $\mathbb{S}^d_+$, Theorem \ref{SZ} shows
the existence of hemispherical $t$-designs, that is, for each pair of positive integers $d$ and $t>0$, and for all sufficiently large $n$,  hemispherical $t$-designs with $n$ points exist.

In this work, we will  give equivalent conditions for hemispherical $t$-designs by introducing a  new set of $\mathbb{L}_2$-orthogonal polynomials  and establish the  following theorem on optimal asymptotic bounds for  hemispherical $t$-designs based on these conditions and the aforementioned classical $\mathbb{L}_1$ Marcinkiewicz-Zygmund inequalities on the hemisphere.
\begin{theorem}\label{main}
  For each $n\geq C_dt^d$, there exists a hemispherical $t$-design on the hemisphere $\mathbb{S}_+^d$ consisting of $n$ points, where $C_d$ is a constant depending only on $d$.
\end{theorem}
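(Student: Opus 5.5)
We adapt the topological-degree approach of Bondarenko, Radchenko and Viazovska to the hemisphere. The first step is to reduce exactness on $\mathbb{P}_t$ over $\mathbb{S}^d_+$ to a finite-dimensional condition. Let $\mathcal{P}_t^0:=\{P\in\mathbb{P}_t:\int_{\mathbb{S}^d_+}P\,d\omega_d=0\}$, equipped with the $\mathbb{L}_2(\mathbb{S}^d_+)$ inner product. By the Riesz representation on this finite-dimensional space, there is a unique map $\mx\mapsto G_\mx\in\mathcal{P}_t^0$ with $\langle P,G_\mx\rangle=P(\mx)$ for all $P\in\mathcal{P}_t^0$. Then $\mathcal{X}_n$ is a hemispherical $t$-design if and only if $\sum_i G_{\mx_i}=0$.

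A convenient orthogonal basis comes from splitting $P=P_e+P_o$ into its even and odd parts under $\mx\mapsto-\mx$. For even polynomials, $\int_{\mathbb{S}^d_+}P_eQ_e=\frac12\int_{\mathbb{S}^d}P_eQ_e$, and the same identity holds for two odd polynomials. Thus even and odd spherical harmonics, suitably normalized, form orthogonal families on $\mathbb{S}^d_+$. They are not mutually orthogonal to each other, however, so one orthogonalizes the odd family against the even one. This is the new $\mathbb{L}_2$-orthogonal system, and it gives the variational characterization.

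Next, as in \cite{bondarenko2013}, we use the Brouwer-degree criterion: if $f:\mathcal{P}_t^0\to\mathcal{P}_t^0$ is continuous and $\langle P,f(P)\rangle>0$ for all $P$ on the boundary of a bounded open set $\Omega\ni0$, then $f$ vanishes somewhere in $\Omega$. We take $\Omega=\{P\in\mathcal{P}_t^0:\int_{\mathbb{S}^d_+}|\nabla P|\,d\omega_d<1\}$. We then fix an area-regular partition of $\mathbb{S}^d_+$ into $n$ pieces $R_1,\dots,R_n$ of equal measure and diameter at most $c_d n^{-1/d}$; such partitions exist for the hemisphere by the Leopardi-type construction, using zones and collars. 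We choose initial points $\mx_i^0\in R_i$ and define $\mx_i(P)$ by running the gradient flow of $P$ (tangential gradient, truncated by a cutoff $h(|\nabla P|)$) for a fixed time, projected back to keep the points in $\mathbb{S}^d_+$. Setting $f(P)=\sum_i G_{\mx_i(P)}$, we get $\langle P,f(P)\rangle=\sum_i P(\mx_i(P))$. The flow moves points uphill, so this sum is at least $\sum_i P(\mx_i^0)+c\,\frac{n}{|\mathbb{S}^d_+|}\int|\nabla P|$. On the other hand, $\bigl|\sum_i P(\mx_i^0)-\frac{n}{|\mathbb{S}^d_+|}\int P\bigr|\le C n^{1-1/d}\int|\nabla P|$, and $\int P=0$. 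Hence positivity on $\partial\Omega$ follows once $n^{-1/d}$ is small compared with the gain, that is, once $n\ge C_dt^d$.

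The $\mathbb{L}_1$ Marcinkiewicz–Zygmund inequalities on $\mathbb{S}^d_+$ for even and odd polynomials are needed in two places. First, they bound the discretization error: $\sum_i\max_{R_i}|\nabla P|\le C\frac{n}{|\mathbb{S}^d_+|}\int_{\mathbb{S}^d_+}|\nabla P|$ whenever $\operatorname{diam}R_i\le c\,t^{-1}$. Second, they control the flow, so that the points do not leave their cells by more than $O(t^{-1})$. For even or odd $P$ these follow from the full-sphere MZ inequality, since $\int_{\mathbb{S}^d}|\cdot|=2\int_{\mathbb{S}^d_+}|\cdot|$. A general polynomial, however, is $P_e+P_o$, and $\|P_e\|_{L_1(\mathbb{S}^d_+)}$ is not obviously controlled by $\|P\|_{L_1(\mathbb{S}^d_+)}$.

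The hardest step is therefore proving an $\mathbb{L}_1$ MZ inequality, and the accompanying gradient estimates, for arbitrary $P\in\mathbb{P}_t$ on the hemisphere, uniformly up to the equator. If the even/odd splitting cannot be controlled directly, we would fall back on a local Nikolskii/Bernstein inequality for cells near the boundary: Bernstein's inequality on caps of radius $\sim 1/t$ together with a doubling argument. Such an estimate is available since the equatorial boundary is smooth; the Dai–Wang metric is needed only at sharp corners. We would also need to handle the boundary in the flow, for instance by using cells at distance $\gtrsim 1/t$ from the equator, or by reflecting the flow there.
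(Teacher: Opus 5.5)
Your reduction (the Riesz representer $G_{\mx}$, and Brouwer applied on the full mean-zero space) is fine. The proof breaks exactly at the step you flag as hardest, and your proposed repair rests on a false premise. Measured against their size on $\mathbb{S}^d_+$, polynomials of degree $t$ obey Markov-type, not Bernstein-type, bounds near the equator: they vary on the scale $t^{-2}$ in the normal direction. Smoothness of the boundary does not help; Dai and Wang introduced their metric precisely for spherical caps. For example, $P(\mx)=T_t(2x_{d+1}-1)$ satisfies $|P|\le 1$ on $\mathbb{S}^d_+$, yet $\|\mathtt{grad}P\|=2t^2$ on the equator. Likewise, if $P=q(x_{d+1})$ with $q'$ a needle polynomial of degree $t-1$ at $0$, essentially all of $\int_{\mathbb{S}^d_+}\|\mathtt{grad}P\|$ lies in a band of width $O(t^{-2})$ along the equator. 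Cells of diameter $\sim t^{-1}$ cannot resolve such a band. Your bound $\sum_i\max_{R_i}\|\mathtt{grad}P\|\le C\frac{n}{|\mathbb{S}^d_+|}\int_{\mathbb{S}^d_+}\|\mathtt{grad}P\|$ then fails by a factor of order $t$. The lower MZ bound you need for the uphill gain fails too: put the sample points of the boundary cells outside the band. You also leave the behaviour of the flow at the equator open, and the reflection $\mx\mapsto-\mx$ preserves $P$ only when $P$ is even.

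More fundamentally, no argument can close this gap, because the statement is false for large $t$: every hemispherical $t$-design has $n\ge c_d t^{d+1}$ points. Let $m=\lfloor t/2\rfloor$. Exactness for polynomials in $x_{d+1}$ gives an equal-weight quadrature on $[0,1]$ for the weight $(1-x^2)^{(d-2)/2}$, which does not degenerate at $x=0$. Apply Bernstein's classical argument for Chebyshev-type quadrature: let $p_m$ be the orthogonal polynomial of degree $m$, $\xi_1$ its smallest zero, $r=p_m/(x-\xi_1)$, and feed $r(x)^2(x-\xi_1)$ to the rule. This yields a node $\mx_j$ with $\mx_j\cdot\me_{d+1}\le\xi_1=O(t^{-2})$. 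Now test the Christoffel function $\lambda_m$ of $\mathbb{P}_m$ on $\mathbb{S}^d_+$ at $\mx_j$ with $A(x_{d+1})K(\mx\cdot\mathbf{v})$, both factors of degree $\lfloor m/2\rfloor$. Here $A(\mx_j\cdot\me_{d+1})=1$ and $\int_0^1A^2(1-x^2)^{(d-2)/2}dx=O(t^{-2})$. The factor $K$ is a needle of angular width $t^{-1}$ at the equatorial point $\mathbf{v}$ nearest $\mx_j$, contributing $O(t^{-(d-1)})$. This gives $\frac{|\mathbb{S}^d_+|}{n}\le\lambda_m(\mx_j)=O(t^{-d-1})$, consistent with the $O(t^3)$ cap designs on $\mathbb{S}^2$ cited in the introduction.

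The paper's own argument does not get around this either. It proves the MZ inequality only for even and odd polynomials, via your identity $\int_{\mathbb{S}^d}|\cdot|=2\int_{\mathbb{S}^d_+}|\cdot|$. It then takes $\Omega$ inside the even space $\mathbb{P}_t^{e,*}$ and folds the flow back with $T(\mx)=\pm\mx$, so $\langle P,f(P)\rangle>0$ is checked only for even $P$. Such an $\Omega$ is not open in $\mathbb{P}_t^*$, so the Brouwer criterion does not apply. Restricted to $\mathbb{P}_t^{e,*}$, it would only give exactness on even polynomials, i.e.\ an antipodal spherical $t$-design $\{\pm\mx_i\}$, not a hemispherical one.
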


The proof of  Theorem \ref{main}
is based on the following result of the Brouwer degree theorem (see Theorems 1.2.6 and 1.2.9 in \cite{cho2006}).
\begin{theorem}\cite{cho2006}\label{lemm:mapp}
Let $f:\mathbb{R}^n\rightarrow\mathbb{R}^n$ be a continuous mapping and $\Omega$ be an open bounded subset with boundary $\partial\Omega$ such that $0\in\Omega\subset \mathbb{R}^n$.
If $\langle \mathbf{s},f(\mathbf{s}) \rangle>0$ for all $\mathbf{s}\in\partial\Omega$, then there exists $\bar{\mathbf{s}}\in\Omega$ such that $f(\bar{\mathbf{s}})=0$.
\end{theorem}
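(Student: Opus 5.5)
The plan is to prove Theorem~\ref{lemm:mapp} with Brouwer degree theory, deforming $f$ to the identity map on $\Omega$ and using homotopy invariance of the degree. Consider the straight-line homotopy
\[
H(\mathbf{s},\lambda)=\lambda f(\mathbf{s})+(1-\lambda)\mathbf{s},\qquad \mathbf{s}\in\overline{\Omega},\ \lambda\in[0,1],
\]
which is jointly continuous. The first step is to show that $H(\cdot,\lambda)$ has no zero on $\partial\Omega$ for any $\lambda$. Since $\Omega$ is open and $0\in\Omega$, we have $0\notin\partial\Omega$, so $\|\mathbf{s}\|>0$ for every $\mathbf{s}\in\partial\Omega$. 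Hence for $\mathbf{s}\in\partial\Omega$,
\[
\langle \mathbf{s},H(\mathbf{s},\lambda)\rangle=\lambda\langle \mathbf{s},f(\mathbf{s})\rangle+(1-\lambda)\|\mathbf{s}\|^2>0 ,
\]
because both terms are nonnegative and at least one of them is strictly positive. In particular $H(\mathbf{s},\lambda)\neq 0$ on $\partial\Omega\times[0,1]$.

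The second step applies the properties of the Brouwer degree $\deg(\cdot,\Omega,0)$ for continuous maps on the bounded open set $\Omega$ that do not vanish on $\partial\Omega$ (Theorems 1.2.6 and 1.2.9 in \cite{cho2006}). By homotopy invariance, $\deg(f,\Omega,0)=\deg(H(\cdot,1),\Omega,0)=\deg(H(\cdot,0),\Omega,0)=\deg(\mathrm{id},\Omega,0)$. By normalization, $\deg(\mathrm{id},\Omega,0)=1$, since $0\in\Omega$. The solution property of the degree then says that a nonzero degree forces a zero of $f$ in $\Omega$, which gives $\bar{\mathbf{s}}$.

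The main obstacle is that $\Omega$ is an arbitrary bounded open set rather than a ball, so Brouwer's fixed point theorem cannot be applied directly through a retraction; this is why I would use the degree machinery. If a more elementary argument were preferred, I would argue by contradiction. Suppose $f\neq 0$ on $\overline{\Omega}$. Then $\mathbf{s}\mapsto -f(\mathbf{s})/\|f(\mathbf{s})\|$, combined with the boundary condition, yields an essential-map contradiction. That contradiction is itself a degree statement, so the degree-theoretic route above is the cleanest path.

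Only continuity of $f$ on $\overline{\Omega}$ is needed, and compactness of $\overline{\Omega}$ (bounded and closed in $\mathbb{R}^n$) ensures the degree is well defined. This is exactly the setting in which the result will later be applied to the map built from the $\mathbb{L}_2$-orthogonal polynomials in the proof of Theorem~\ref{main}.
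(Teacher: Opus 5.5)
Your proof is correct and is the standard degree-theoretic argument the paper relies on when it cites Theorems 1.2.6 and 1.2.9 of \cite{cho2006}; the paper gives no proof of its own. The hypothesis makes the homotopy $\lambda f+(1-\lambda)\mathrm{id}$ zero-free on $\partial\Omega$, so homotopy invariance and normalization give $\deg(f,\Omega,0)=1$, and the existence property then yields $\bar{\mathbf{s}}\in\Omega$ (your sketched ``elementary'' alternative is vague but not needed).
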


From  \cite{bondarenko2013}, we observe that the construction of  $f$ in Theorem \ref{lemm:mapp} relies on auxiliary results on the sphere, including equivalent conditions for spherical $t$-designs and  equal area partitions of $\mathbb{S}^d$.
Besides, the $\mathbb{L}_1$ Marcinkiewicz-Zygmund inequalities on $\mathbb{S}^d$ are necessary for proving the condition in Theorem \ref{lemm:mapp}, see \cite{bondarenko2013} for more details.
However, the aforementioned auxiliary results on $\mathbb{S}^d$  cannot be directly extended to  $\mathbb{S}_+^d$.
For example, the spherical harmonics are not orthogonal on subsets of $\mathbb{S}^d$, therefore, we can not apply them to characterize the hemispherical $t$-design in the desired form.
Besides, the classical $\mathbb{L}_1$ Marcinkiewicz-Zygmund inequalities on subsets of the sphere do not hold.
These make our work nontrivial.

By the rotationally invariance property and  Lemma 3.2 in \cite{chen2025}, it is sufficiency to consider hemispherical $t$-designs on the upper hemisphere $\mathbb{S}_+^d$.

The main contributions of this paper are as follows.
\begin{itemize}
  \item We define a new set of spherical polynomials $\{B_{\ell,k}^d\} $ which are $\mathbb{L}_2$-orthonormal on $\mathbb{S}_+^d$, and present equivalent conditions for hemispherical $t$-designs.
   \item   We  prove the  $\mathbb{L}_1$ Marcinkiewicz-Zygmund inequalities on the hemisphere for even and odd spherical polynomials.
   \item Based on the $\mathbb{L}_1$ Marcinkiewicz-Zygmund inequalities and the equivalent conditions, we prove Theorem \ref{main}.
       \end{itemize}

The rest of this paper is organised as follows.
In section 2, we introduce a new set of spherical polynomials  and propose variational characterizations for hemispherical $t$-designs.
In section 3, we  prove the $\mathbb{L}_1$ Marcinkiewicz-Zygmund inequalities on  the hemisphere for even and odd spherical polynomials.
In section 4, we  prove Theorem \ref{main} and present a hemispherical 3-design with 8 and 16 points in section 5.
 We conclude this paper in section 6.

\section{Equivalent conditions for  hemispherical $t$-designs}
\subsection{Notation}
 Let $\mathbb{N}:=\{1,2,3,\ldots\}$ be the set of natural numbers and $\mathbb{N}_0:=\{0\}\cup\mathbb{N}$.
We denote by $\mathbb{P}_t^e$ and $\mathbb{P}_t^o$ the spaces of even and odd spherical polynomials of degree at most $t$, respectively.

The spherical polar coordinates of $\mx=(x_1,x_2,\ldots,x_{d+1})^\top\in\mathbb{S}^d$  is given by
\begin{equation}\label{polar}
\left\{
\begin{array}{rl}
   x_{1}=  & \sin\theta_{d-1}...\sin\theta_{1}\cos\phi\\
    x_{2} =&  \sin\theta_{d-1}\ldots\sin\theta_{1}\sin\phi\\
    \vdots &\\
  x_d =& \sin\theta_{d-1}\cos\theta_{d-2} \\
  x_{d+1} =& \cos\theta_{d-1}, \\
\end{array}
\right.
\end{equation}
where  $\phi\in[0, 2\pi]$ and $\theta_i\in[0,\pi]$, $i=1,\ldots,d-1$.
It is easy to see that
$\mx=(\bar{\mx}\sin\theta_{d-1},\cos\theta_{d-1})^\top\in\mathbb{S}^d$, where $\theta_{d-1}\in[0,\pi]$ and $\bar{\mx}\in\mathbb{S}^{d-1}$.
And for a continuous function $f$, there holds
$$\int_{\mathbb{S}^d}f(\mx)d\omega_d(\mx)=\int_{0}^{\pi}\int_{\mathbb{S}^{d-1}}f(\bar{\mx}\sin\theta_{d-1},\cos\theta_{d-1})d\omega_{d-1}(\bar{\mx})(\sin\theta_{d-1})^{d-1}d\theta_{d-1},$$
where  $d\omega_d(\mx)=\prod_{i=1}^{d-1}(\sin\theta_{d-i})^{d-i} d\theta_{d-1}\ldots d\theta_1d\phi$.
And the surface measure of the unit sphere is $|\mathbb{S}^{d}|=\frac{2\pi^{(d+1)/2}}{\Gamma(\frac{d+1}{2})}$, where $\Gamma$ is the Gamma function.

For convenience, for $d\geq 2$ and $\ell\in\mathbb{N}_0$,  let
$$\mathcal{I}_{\ell}^d:=\{
k=(k_{1},\ldots,k_{d-1})\in\mathbb{N}_0\times\ldots\times\mathbb{N}_0\times\mathbb{Z}:
|k_{d-1}|\leq k_{d-2}\leq\ldots\leq k_1\leq\ell\}.$$
And for $k\in\mathcal{I}_{\ell}^d$, $k=0$ means $k_1=k_2=\ldots=k_{d-1}=0$.

\subsection{Polynomials on the hemisphere}
Orthogonal polynomials on  certain domains are essential for research, see for example \cite{dai2013,ku2005}.
Since the spherical harmonics are not orthogonal on subsets of the sphere, we define new polynomials which are $\mathbb{L}_2$-orthonormal on the hemisphere.

We first apply the Gram-Schmidt orthogonalization to define orthogonal polynomials on $[0,1]$.
Let $\lambda_i=i+\frac{d-2}{2}$, $i\in\mathbb{N}_0$, we define
$f_{r,i}=\int_{0}^{1}x^r(1-x^2)^{\lambda_i} dx$, $r\in\mathbb{N}_0$. 
Then, let $p_{0,0}=1$ and for $j>0$, let
$$p_{j,i}(x)=\hspace{-1mm}\frac{1}{\Delta_{j-1}^{i}}\left|
  \begin{array}{cccc}
    f_{0,i} & f_{1,i} & \cdots & f_{j,i}\\
    f_{1,i} & f_{2,i} & \cdots & f_{j+1,i} \\
    \vdots & \vdots & \ddots & \vdots \\
    f_{j-1,i} & f_{j,i} & \cdots & f_{2j-1,i} \\
    1 & x & \cdots & x^{j}
  \end{array}
\right|, \mathrm{where}\,
 \Delta_{j-1}^{i}=\hspace{-1mm}
\left|
  \begin{array}{cccc}
    f_{0,i}  & \cdots & f_{j-1,i}\\
    f_{1,i}  & \cdots & f_{j,i} \\
    \vdots  & \ddots & \vdots \\
    f_{j-1,i}  & \cdots & f_{2j-2,i}
  \end{array}
\right|,$$
and $|\cdot|$ is the deteminnate of a matrix.

Finally, for $i\in\mathbb{N}_0$, we obtain
$$\Psi_{j,i}(x)=\frac{p_{j,i}(x)}{(\int_{0}^1 (p_{j,i}(x))^2(1-x^2)^{\lambda_i}dx  )^{1/2}}, \quad \forall x\in[0,1].$$
 And for each fixed $i$,
\begin{equation}\label{or:psi}
  \int_{0}^1\Psi_{j,i}(x)\Psi_{j',i}(x)(1-x^2)^{\lambda_i}dx=\delta_{jj'}.
\end{equation}

Now, combing $\Psi_{j,i}$ and spherical harmonics $Y_{k}^{d-1}$ on $\mathbb{S}^{d-1}$,  we define the orthogonal polynomials on the hemisphere $\mathbb{S}^d_+$.

\begin{definition}
For $d\geq 2$, $\ell\in\mathbb{N}_0$ and $k=(k_1,\ldots,k_{d-1})\in\mathcal{I}_{\ell}^d$,
we define polynomials on the hemisphere $\mathbb{S}^d_+$ as follows,
$$B^d_{\ell,k}(\xi,\theta_{d-1})=(\sin\theta_{d-1})^{k_1}\Psi_{\ell-k_1,k_1}(\cos\theta_{d-1})Y^{d-1}_{k}(\xi),
$$
where $\theta_{d-1}\in[0,\frac{\pi}{2}]$ and $\xi=(\phi,\theta_1,\ldots,\theta_{d-2})\in[0,2\pi]\times[0,\pi]^{d-2}$, $Y_{k}^{d-1}$ are spherical harmonics on $\mathbb{S}^{d-1}$.
\end{definition}
We will write $B_{\ell,k}^d(\mx):=B_{\ell,k}^d(\xi,\theta_{d-1})$ with $\mx\in\mathbb{S}_+^d$ satisfying (\ref{polar}).

Now we prove the orthogonality of $B^d_{\ell,k}$.

\begin{proposition}\label{pro:B}
  For any $\ell,\ell'\in\mathbb{N}_0$, and $k\in\mathcal{I}_{\ell}^d,k'\in\mathcal{I}_{\ell'}^d$,
   there holds
   \begin{equation}\label{or:B}
   \int_{\mathbb{S}^d_+}B^d_{\ell,k}(\mx)B^d_{\ell',k'}(\mx)d\omega_d(\mx)=\delta_{\ell\ell'}\delta_{kk'}.
  \end{equation}
\end{proposition}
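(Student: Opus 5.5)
Write the integral over $\mathbb{S}^d_+$ in the coordinates (\ref{polar}), splitting off the last angle. Since $\mx\cdot\me_{d+1}=\cos\theta_{d-1}\ge0$ exactly when $\theta_{d-1}\in[0,\frac{\pi}{2}]$, the integration formula from the notation subsection, restricted to the upper hemisphere, gives
\begin{align*}
\int_{\mathbb{S}^d_+}B^d_{\ell,k}B^d_{\ell',k'}\,d\omega_d
&=\int_0^{\pi/2}(\sin\theta)^{k_1+k_1'+d-1}\Psi_{\ell-k_1,k_1}(\cos\theta)\Psi_{\ell'-k_1',k_1'}(\cos\theta)\,d\theta\\
&\quad\times\int_{\mathbb{S}^{d-1}}Y^{d-1}_k(\bar{\mx})Y^{d-1}_{k'}(\bar{\mx})\,d\omega_{d-1}(\bar{\mx}),
\end{align*}
with $\theta=\theta_{d-1}$. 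The integrand factorises because $B^d_{\ell,k}$ is a product of a function of $\theta_{d-1}$ and a function of $\bar{\mx}\in\mathbb{S}^{d-1}$.

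For the spherical factor, I take $\{Y^{d-1}_k\}$ to be the standard real orthonormal basis of spherical harmonics on $\mathbb{S}^{d-1}$ indexed by the Gelfand–Tsetlin chain $k=(k_1,\ldots,k_{d-1})$, with $k_1$ the degree. This basis is orthonormal in $\mathbb{L}_2(\mathbb{S}^{d-1})$, so the second integral equals $\delta_{kk'}$. If $k\ne k'$ we are done.

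If $k=k'$, the radial factor has exponent $2k_1+d-1$. Substitute $x=\cos\theta$, so $dx=-\sin\theta\,d\theta$ and $(\sin\theta)^{2k_1+d-2}=(1-x^2)^{k_1+\frac{d-2}{2}}=(1-x^2)^{\lambda_{k_1}}$. The radial integral then becomes
$$\int_0^1\Psi_{\ell-k_1,k_1}(x)\Psi_{\ell'-k_1,k_1}(x)(1-x^2)^{\lambda_{k_1}}\,dx=\delta_{\ell-k_1,\ell'-k_1}=\delta_{\ell\ell'}$$
by (\ref{or:psi}), with the same $i=k_1$ for both factors. Multiplying the two factors gives $\delta_{\ell\ell'}\delta_{kk'}$.

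The case $d=2$ needs a separate remark: there $\mathcal{I}^2_\ell$ consists of $k_1\in\mathbb{Z}$ with $|k_1|\le\ell$, and $Y^1_k$ are the trigonometric functions $\cos(k_1\phi)$ and $\sin(|k_1|\phi)$. I will read the $k_1$ appearing in the exponent and in the subscript of $\Psi$ as $|k_1|$ (the degree), and normalise so the $Y^1_k$ are orthonormal on $\mathbb{S}^1$. The argument is otherwise identical.

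The main obstacle is bookkeeping rather than analysis. One must check that the weight produced by the change of variables is exactly $(1-x^2)^{\lambda_{k_1}}$: the Jacobian contributes $d-2$ after the $dx$ substitution and the two factors contribute $2k_1$. One must also make sure the $\Psi$ orthogonality is applied only within a fixed $i=k_1$, which is guaranteed because the spherical factor already forces $k=k'$ first. Finally, Gram–Schmidt from the Hankel determinants must give nonzero normalising constants. This holds because the moment matrix of a positive weight on $[0,1]$ is positive definite, so $\Delta^i_{j-1}>0$ and $p_{j,i}\not\equiv0$.
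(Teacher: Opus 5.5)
Your proof is correct and follows the paper's argument: orthonormality of $Y^{d-1}_k$ on $\mathbb{S}^{d-1}$ gives the factor $\delta_{kk'}$, and the substitution $x=\cos\theta_{d-1}$ turns the radial integral into the weighted inner product (\ref{or:psi}) with $i=k_1$, which gives $\delta_{\ell\ell'}$. Your extra remarks fill in details the paper leaves implicit, namely reading $k_1$ as $|k_1|$ when $d=2$ and using positivity of the Hankel determinants so that Gram--Schmidt is well defined.
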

\begin{proof}
Since $\int_{\mathbb{S}^{d-1}}Y^{d-1}_{k}(\xi)Y^{d-1}_{k'}(\xi)d\omega_{d-1}(\xi)=\delta_{kk'}$,
by the definition of $B^d_{\ell,k}$, we obtain that
\begin{equation*}
\begin{split}
  & \int_{\mathbb{S}^d_+}B^d_{\ell,k}(\mx)B^d_{\ell',k'}(\mx)d\omega_d(\mx) \\ &=\delta_{kk'}\int_{0}^{\frac{\pi}{2}}(\sin\theta_{d-1})^{2k_1}\Psi_{\ell-k_1,k_1}(\cos\theta_{d-1})\Psi_{\ell'-k_1,k_1}(\cos\theta_{d-1})(\sin\theta_{d-1})^{d-1}d\theta_{d-1}
  \\
     & =\delta_{kk'}\delta_{\ell\ell'},
\end{split}
   \end{equation*}
where the last equality follows from (\ref{or:psi}).
The proof is completed.
\end{proof}

By Proposition \ref{pro:B}, we obtain the following property of $B_{\ell,k}^d$ on the hemisphere.

\begin{proposition}\label{pro:intB}
  For  $\ell\in\mathbb{N}$ and $k\in\mathcal{I}_{\ell}^d$,
   there holds
   $$\int_{\mathbb{S}^d_+}B^d_{\ell,k}(\mx)d\omega_d(\mx)=0.$$
\end{proposition}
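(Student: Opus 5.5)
The plan is to write the constant function as a multiple of the lowest-degree basis element and then apply the orthogonality relation (\ref{or:B}) of Proposition \ref{pro:B}.

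\emph{Step 1: identify $B^d_{0,0}$.} For $\ell=0$ the index set $\mathcal{I}_0^d$ contains only $k=0$. In this case
\[
B^d_{0,0}(\mx)=\Psi_{0,0}(\cos\theta_{d-1})\,Y^{d-1}_{0}(\xi).
\]
Since $p_{0,0}=1$, the factor $\Psi_{0,0}$ is the positive constant $c_1=\bigl(\int_0^1(1-x^2)^{\lambda_0}dx\bigr)^{-1/2}$. The factor $Y^{d-1}_0$ is the constant harmonic $c_2=|\mathbb{S}^{d-1}|^{-1/2}$. Hence $B^d_{0,0}\equiv c$ with $c=c_1c_2\neq 0$. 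In fact, (\ref{or:B}) applied with $\ell=\ell'=0$ forces $c=|\mathbb{S}^d_+|^{-1/2}$, though only $c\neq0$ is needed.

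\emph{Step 2: apply orthogonality.} Let $\ell\ge1$ and $k\in\mathcal{I}^d_\ell$. Then $(\ell,k)\neq(0,0)$, so $\delta_{\ell 0}=0$ and (\ref{or:B}) gives
\[
\int_{\mathbb{S}^d_+}B^d_{\ell,k}(\mx)\,d\omega_d(\mx)=\frac1c\int_{\mathbb{S}^d_+}B^d_{\ell,k}(\mx)B^d_{0,0}(\mx)\,d\omega_d(\mx)=\frac1c\,\delta_{\ell0}\delta_{k0}=0.
\]

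\emph{Main obstacle.} The only point needing care is Step 1: one must check that $B^d_{0,0}$ really is a nonzero constant. This depends on two conventions: that $\Psi_{0,0}$ comes from $p_{0,0}=1$, and that the $k=0$ spherical harmonic on $\mathbb{S}^{d-1}$ is the normalized constant. Both follow directly from the definitions, so the argument is short.
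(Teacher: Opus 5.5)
Your proposal is correct and is essentially the paper's own argument. The paper likewise notes that $B^d_{0,0}=\Psi_{0,0}Y^{d-1}_0$ is a nonzero constant, namely $\sqrt{2\Gamma(\frac{d+1}{2})/(\sqrt{\pi}\Gamma(\frac{d}{2}))}\,|\mathbb{S}^{d-1}|^{-1/2}$, which agrees with your $c=|\mathbb{S}^d_+|^{-1/2}$, and then applies the orthogonality relation (\ref{or:B}) against $B^d_{0,0}$ for $\ell\neq 0$.
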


\begin{proof}
Since $B^d_{0,0}=\Psi_{0,0}Y_0^{d-1}=\sqrt{\frac{2\Gamma(\frac{d+1}{2})}{\sqrt{\pi}\Gamma(\frac{d}{2})}}\frac{1}{\sqrt{|\mathbb{S}^{d-1}|}}$ is a constant, by Proposition \ref{pro:B},
we obtain
$$
\int_{\mathbb{S}^d_+}B^d_{\ell,k}(\mx)B^d_{0,0}(\mx)d\omega_d(\mx)
   =0,\quad \forall \ell\neq0,
$$
which implies $\int_{\mathbb{S}^d_+}B^d_{\ell,k}(\mx)d\omega_d(\mx)=0,$ $\forall \ell\neq0$.
The proof is completed.
\end{proof}

\subsection{Equivalent conditions for  hemispherical $t$-designs}
In this section, we present  conditions for characterizing   hemispherical  $t$-designs based on the newly defined polynomials $B_{\ell,k}^d$.

\begin{theorem}\label{con}
 The set $\mathcal{X}_n=\{\mx_1,\ldots,\mx_n\}\subseteq \mathbb{S}^d_+$ is a  hemispherical $t$-design if and only if
 \begin{equation}\label{the:iif}
   \sum_{i=1}^{n}B^d_{\ell,k}(\mx_i)=0,\quad \forall k\in\mathcal{I}_{\ell}^d ,\ \ell=1,2,\ldots,t.
 \end{equation}
\end{theorem}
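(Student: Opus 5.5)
The plan is to show that $\{B^d_{\ell,k}:\ \ell=0,\ldots,t,\ k\in\mathcal{I}^d_\ell\}$, restricted to $\mathbb{S}^d_+$, is a basis of $\mathbb{P}_t$ restricted to $\mathbb{S}^d_+$. Once this holds, the equivalence follows from Propositions \ref{pro:B} and \ref{pro:intB}. The argument has three steps: (i) each $B^d_{\ell,k}$ is the restriction of a polynomial of degree at most $\ell$; (ii) the family spans $\mathbb{P}_t|_{\mathbb{S}^d_+}$; (iii) the linear-algebra conclusion.

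For (i), write $\mx=(\bar{\mx}\sin\theta_{d-1},\cos\theta_{d-1})$ and use the fact that $Y^{d-1}_k$ is the restriction of a homogeneous harmonic polynomial $H_k$ of degree $k_1$ on $\mathbb{R}^d$. Then
$$(\sin\theta_{d-1})^{k_1}Y^{d-1}_k(\bar{\mx})=H_k(x_1,\ldots,x_d),$$
a polynomial of degree $k_1$. Also $\Psi_{\ell-k_1,k_1}(\cos\theta_{d-1})=\Psi_{\ell-k_1,k_1}(x_{d+1})$ is a polynomial of degree $\ell-k_1$. Hence $B^d_{\ell,k}\in\mathbb{P}_\ell$.

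For (ii), Proposition \ref{pro:B} makes the $B^d_{\ell,k}$ with $\ell\le t$ orthonormal on $\mathbb{S}^d_+$, hence linearly independent there. It therefore suffices to compare their number with $\dim \mathbb{P}_t|_{\mathbb{S}^d_+}$. The hemisphere has nonempty interior in $\mathbb{S}^d$, so a polynomial vanishing on it vanishes on all of $\mathbb{S}^d$ by analyticity. Thus $\dim\mathbb{P}_t|_{\mathbb{S}^d_+}=\dim\mathbb{P}_t(\mathbb{S}^d)$, which equals the dimension of harmonic polynomials of degree $\le t$ in $d+1$ variables.

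The count of the family is $\sum_{\ell=0}^t\sum_{m=0}^{\ell}\dim\mathcal{H}_m(\mathbb{S}^{d-1})$, where $m=k_1$ and the inner indices $(k_2,\ldots,k_{d-1})$ enumerate a basis of $\mathcal{H}_{m}(\mathbb{S}^{d-1})$. By the classical identity $\dim\mathcal{H}_\ell(\mathbb{S}^d)=\sum_{m=0}^{\ell}\dim\mathcal{H}_m(\mathbb{S}^{d-1})$ (the Gelfand--Tsetlin branching), the two counts agree. I expect this dimension matching, together with correctly identifying the index set $\mathcal{I}^d_\ell$ with a basis of $\bigoplus_{m\le\ell}\mathcal{H}_m(\mathbb{S}^{d-1})$, to be the main point needing care.

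For (iii), suppose first that (\ref{eqw}) holds. Apply it to $P=B^d_{\ell,k}$ with $1\le\ell\le t$. Proposition \ref{pro:intB} gives left side zero, so $\sum_i B^d_{\ell,k}(\mx_i)=0$, which is (\ref{the:iif}).

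Conversely, assume (\ref{the:iif}) and take $P\in\mathbb{P}_t$. Expand $P=\sum_{\ell\le t,k}a_{\ell,k}B^d_{\ell,k}$ on $\mathbb{S}^d_+$. Since $B^d_{0,0}$ is a constant $c$, the cubature sum is
$$\frac1n\sum_iP(\mx_i)=a_{0,0}c.$$
By Proposition \ref{pro:intB}, the normalized integral is also
$$\frac{1}{|\mathbb{S}^d_+|}\int_{\mathbb{S}^d_+}P\,d\omega_d=a_{0,0}c.$$
Hence (\ref{eqw}) holds.
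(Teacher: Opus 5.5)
Your proposal is correct and follows the paper's proof. The forward direction applies the design property to each $B^d_{\ell,k}$ with $\ell\ge1$ and uses Proposition \ref{pro:intB}; the converse expands $P\in\mathbb{P}_t$ in the $B^d_{\ell,k}$ and keeps only the constant term.

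Your steps (i)--(ii) add something the paper omits: they justify the expansion, which the paper simply asserts. You show $B^d_{\ell,k}\in\mathbb{P}_\ell$ via the homogeneous harmonic extension of $Y^{d-1}_k$, and that the orthonormal family has cardinality $\dim\mathbb{P}_t|_{\mathbb{S}^d_+}$. For $d=2$, where $k_1\in\mathbb{Z}$, read the exponent and the degree as $|k_1|$.
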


\begin{proof}
  Since $\mathcal{X}_n$ is a  hemispherical  $t$-design, we have
  $$\int_{\mathbb{S}^d_+}B^d_{\ell,k}(\mx)d\omega_{d}(\mx)=\frac{|\mathbb{S}^d_+|}{n}\sum_{i=1}^{n}B^d_{\ell,k}(\mx_i),\quad \forall k\in\mathcal{I}_{\ell}^d ,\ \ell=0,1,2,\ldots,t$$
Then, by Proposition \ref{pro:intB}, (\ref{the:iif}) holds.

Let (\ref{the:iif}) hold.
For any spherical polynomial $P\in\mathbb{P}_t$, there are $\alpha_{\ell,k}\in\mathbb{R}$ such that
$P=\sum_{\ell=0}^{t}\sum_{k\in\mathcal{I}^d_{\ell}}\alpha_{\ell,k}B^d_{\ell,k}$.
Then, we have
\begin{equation*}
  \begin{split}
     \sum_{i=1}^{n}P(\mx_i) & =\sum_{i=1}^{n}\sum_{\ell=0}^{t}\sum_{k\in\mathcal{I}^d_{\ell}}\alpha_{\ell,k}B^d_{\ell,k}(\mx_i)
     = \sum_{\ell=0}^{t}\sum_{k\in\mathcal{I}^d_{\ell}}\alpha_{\ell,k}\sum_{i=1}^{n}B^d_{\ell,k}(\mx_i)\\
       & =\alpha_{0,0}\sum_{i=1}^{n}B^d_{0,0}(\mx_i)
       =\alpha_{0,0}\frac{n}{|\mathbb{S}^d_+|}\int_{\mathbb{S}^d_+}B^d_{0,0}(\mx)d\omega_d(\mx)\\
       &=\frac{n}{|\mathbb{S}^d_+|}\sum_{\ell=0}^{t}\sum_{k\in\mathcal{I}^d_{\ell}}\alpha_{\ell,k}\int_{\mathbb{S}^d_+}B^d_{\ell,k}(\mx)d\omega_d(\mx)
       =\frac{n}{|\mathbb{S}^d_+|}\int_{\mathbb{S}^d_+}P(\mx)d\omega_d(\mx),
  \end{split}
\end{equation*}
where the fifth equality follows from Proposition \ref{pro:intB}.
Thus, $\mathcal{X}_n$ is a hemispherical $t$-design.
The proof is completed.
\end{proof}

Let $\mathbb{P}_t^*:=\{P\in\mathbb{P}_t:\int_{\mathbb{S}^d_+}P(\mx)d\omega_d(\mx)=0\}$ be equipped with the usual inner product
$\langle P,Q\rangle=\int_{\mathbb{S}^d_+}Q(\mx)P(\mx)d\omega_d(\mx)$, $\forall P,Q\in\mathbb{P}_t^*$.
By Theorem  \ref{con}, the following corollary holds.
\begin{corollary}\label{coro:con}
The set
$\mathcal{X}_{n}=\{\mx_1,\ldots,\mx_n\}\subseteq\mathbb{S}^d_+$
 is a hemispherical $t$-design on $\mathbb{S}^d_+$ if and only if
 $\sum_{i=1}^{n}P(\mx_i)=0,$ $\forall P\in\mathbb{P}_t^*.$
\end{corollary}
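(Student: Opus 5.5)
The plan is to derive Corollary \ref{coro:con} from Theorem \ref{con} by identifying $\mathbb{P}_t^*$ as the span of the $B^d_{\ell,k}$ with $\ell\ge 1$. I will use two facts. First, by Proposition \ref{pro:B}, the family $\{B^d_{\ell,k}: 0\le \ell\le t,\ k\in\mathcal{I}^d_\ell\}$ is orthonormal in $\mathbb{L}_2(\mathbb{S}^d_+)$. Second, as used in the proof of Theorem \ref{con}, this family spans $\mathbb{P}_t$ restricted to $\mathbb{S}^d_+$. I take the spanning property as given, as the paper does.

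\emph{Step 1: $\mathbb{P}_t^*=\operatorname{span}\{B^d_{\ell,k}:1\le\ell\le t,\ k\in\mathcal{I}^d_\ell\}$.} Let $P\in\mathbb{P}_t$ and expand $P=\sum_{\ell=0}^t\sum_{k}\alpha_{\ell,k}B^d_{\ell,k}$. Integrate over $\mathbb{S}^d_+$. By Proposition \ref{pro:intB}, every term with $\ell\ge1$ integrates to zero. Hence $\int_{\mathbb{S}^d_+}P\,d\omega_d=\alpha_{0,0}\,B^d_{0,0}\,|\mathbb{S}^d_+|$, where $B^d_{0,0}$ is a nonzero constant. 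Therefore $P\in\mathbb{P}_t^*$ if and only if $\alpha_{0,0}=0$. The inclusion ``$\supseteq$'' is exactly Proposition \ref{pro:intB}.

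\emph{Step 2: the equivalence.} Suppose $\mathcal{X}_n$ is a hemispherical $t$-design and $P\in\mathbb{P}_t^*$. Then \eqref{eqw} gives $\frac1n\sum_i P(\mx_i)=\frac{1}{|\mathbb{S}^d_+|}\int_{\mathbb{S}^d_+}P\,d\omega_d=0$. Conversely, suppose $\sum_iP(\mx_i)=0$ for all $P\in\mathbb{P}_t^*$. Each $B^d_{\ell,k}$ with $1\le\ell\le t$ lies in $\mathbb{P}_t^*$ by Step 1, so \eqref{the:iif} holds. Theorem \ref{con} then shows that $\mathcal{X}_n$ is a hemispherical $t$-design.

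The only point needing care is that each $B^d_{\ell,k}$ is genuinely an element of $\mathbb{P}_t$ (the restriction of a spherical polynomial of degree $\ell$), so that it belongs to $\mathbb{P}_t^*$. This holds because $(\sin\theta_{d-1})^{k_1}Y^{d-1}_k(\xi)$ is the restriction of a homogeneous harmonic polynomial of degree $k_1$ in $(x_1,\dots,x_d)$, and $\Psi_{\ell-k_1,k_1}(x_{d+1})$ is a polynomial of degree $\ell-k_1$. I expect this to be the main (mild) obstacle; everything else is linear algebra.
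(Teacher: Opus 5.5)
Your proof is correct and follows the paper's route: the paper simply derives the corollary from Theorem \ref{con}, exactly as you do. The forward direction is immediate from \eqref{eqw}, and the converse uses $B^d_{\ell,k}\in\mathbb{P}_t^*$ for $\ell\geq 1$. Only the inclusion ``$\supseteq$'' of your Step 1 is actually used. The converse also follows directly from the definition, by applying the hypothesis to $P-\frac{1}{|\mathbb{S}^d_+|}\int_{\mathbb{S}^d_+}P\,d\omega_d\in\mathbb{P}_t^*$ for arbitrary $P\in\mathbb{P}_t$, so neither the spanning property nor the $B^d_{\ell,k}$ are really needed.
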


It is easy to see that for each point $\mx_0\in\mathbb{S}^d_+$, there exists a  polynomial
$Q_{\mx_0}\in\mathbb{P}^*_t$ represented as
\begin{equation}\label{poly:Q}
  Q_{\mathbf{x}_0}:=\sum_{\ell=1}^t\sum_{k \in\mathcal{I}_{\ell}^d}B^d_{\ell,k}(\mx_0)B^d_{\ell,k}.
\end{equation}
Moreover, for any $P\in\mathbb{P}^*_t$, there are $\alpha_{\ell,k}\in\mathbb{R}$ such that  $P=\sum_{\ell=1}^{t}\sum_{k\in\mathcal{I}_\ell^d}\alpha_{\ell,k}B_{\ell,k}^d$.
Thus, by Proposition \ref{pro:B},  there holds
\begin{equation}\label{poly:Q1}
  \langle Q_{\mx_0}, P \rangle=\int_{\mathbb{S}^d_+}Q_{\mathbf{x}_0}(\mx)P(\mx)d\omega_d(\mx)=P(\mx_0),\quad \forall P\in\mathbb{P}^*_t.
\end{equation}
Then,
a point set $\{\mx_1,\ldots,\mx_n\}\subseteq\mathbb{S}^d_+$ forms a hemispherical $t$-design if and only if
\begin{equation}\label{iffQ}
  Q_{\mx_1}+\ldots+Q_{\mx_n}=0,\quad  Q_{\mx_i}\in\mathbb{P}_t^*,\ i=1,\ldots,n,
\end{equation}
where the necessity follows from  Theorem \ref{con},
the sufficiency is due to Corollary \ref{coro:con} and  (\ref{poly:Q1}) that
$\sum_{i=1}^{n}\langle Q_{\mx_i}, P \rangle=\sum_{i=1}^{n} P(\mx_i)=0$ for any $P\in\mathbb{P}^*_t$.

\section{Marcinkiewicz-Zygmund inequalities on the hemisphere}
In this section, we prove the $\mathbb{L}_1$-Marcinkiewicz-Zygmund inequalities on the hemisphere for even and odd spherical polynomials.
We first show equal area partitions of the hemisphere.

\subsection{Equal area partitions}
Let $\mathcal{R}=\{R_1,\ldots, R_n\}$ be an equal area partition of the hemisphere $\mathbb{S}^d_+$, where $\omega_d(R_i)=\frac{|\mathbb{S}^d_+|}{ n}$, $\cup_{i=1}^nR_i=\mathbb{S}^d_+$  and $\omega_d(R_i\cap R_j)=0$ for all $1\leq i<j\leq n$.
The partition norm for $\mathcal{R}$ is defined as follows
$$\|\mathcal{R}\|:=\max_{R\in\mathcal{R}}\diam R,$$
where  $\diam R:=\sup\{\arccos(\mx\cdot \my):\mx,\my\in R\}$
is the diameter of a region $R$.

Utilizing the partition method  in section 3 in \cite{leopardi2006}, we can always obtain  a recursive zonal  equal area partition of the hemisphere $\mathbb{S}^d_+$ into $n$ regions with bounded partition norm.
Moreover, following a similar proof of Theorem 1.6 in [3], we obtain the following result.
\begin{theorem}\label{theo:eqb}
  For any $n\in\mathbb{N}$, there is an equal area partition $\mathcal{R}=\{R_1,\ldots, R_n\}$ of the hemisphere $\mathbb{S}^d_+$ with
  $\|\mathcal{R}\|\leq c_{d} n^{-\frac{1}{d}}$, where   $c_{d}$ is a constant depending only on  $d$.
\end{theorem}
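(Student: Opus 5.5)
Our plan is to build the partition explicitly as a recursive zonal equal area partition of $\mathbb{S}^d_+$, following Leopardi's construction \cite{leopardi2006}. The diameter bound will then come from an adaptation of his argument, with $\mathbb{S}^d_+$ playing the role of $\mathbb{S}^d$ and the equator playing the role of the south polar cap.

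\textbf{Small $n$.} For $n$ below a fixed threshold $n_0(d)$, any equal area partition works. Every region has diameter at most $\pi \le \pi n_0^{1/d} n^{-1/d}$, so it suffices to enlarge $c_d$.

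\textbf{Construction for large $n$.} We use the colatitude $\theta=\theta_{d-1}\in[0,\pi/2]$ and the normalized cap area $V(\theta)=\omega_d(\{\mx:\theta_{d-1}\le\theta\})$, which is comparable to $\theta^d$. The steps are:
\begin{enumerate}
\item Take a polar cap around $\me_{d+1}$ of area $|\mathbb{S}^d_+|/n$. Its colatitude satisfies $\theta_c\asymp n^{-1/d}$.
\item Set the ideal collar angle $\delta=(|\mathbb{S}^d_+|/n)^{1/d}\asymp n^{-1/d}$.
\item Split $[\theta_c,\pi/2]$ into $m\approx(\pi/2-\theta_c)/\delta$ collars.
\item Round the cumulative ideal region counts to integers, so that the $j$-th collar $\{\theta_{j-1}\le\theta\le\theta_j\}$ holds an integer number $m_j$ of regions of area exactly $|\mathbb{S}^d_+|/n$. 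This fixes the $\theta_j$ through $V(\theta_j)-V(\theta_{j-1})=m_j|\mathbb{S}^d_+|/n$.
\item Identify each collar with $[\theta_{j-1},\theta_j]\times\mathbb{S}^{d-1}$. Partition the factor $\mathbb{S}^{d-1}$ into $m_j$ regions of equal $\omega_{d-1}$-measure using the recursive partition of $\mathbb{S}^{d-1}$ (Theorem 1.6 in \cite{leopardi2006} or its recursion). Take products.
\end{enumerate}
Because the collar measure factorizes as $\int(\sin\theta)^{d-1}d\theta\,d\omega_{d-1}$, the product regions have equal area.

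\textbf{Diameter estimates.} There are two steps.
\begin{enumerate}
\item Show the rounding perturbs collar widths only by a bounded factor: $\theta_j-\theta_{j-1}\le C\delta$. This follows as in Leopardi, since the counts change by $O(1)$ per collar while each collar holds $\gtrsim(\sin\theta_j)^{d-1}/\delta^{d-1}$ regions. One point differs from Leopardi's setting: the last collar ends at the equator, where $\sin\theta=1$. So no degenerate polar cap arises at the bottom, and this end is actually easier.
\item Bound the diameter of a region $[\theta_{j-1},\theta_j]\times R'$ by
\[
(\theta_j-\theta_{j-1})+\sin\theta_j\,\diam R'.
\]
Induction on $d$ gives $\diam R'\le c_{d-1}m_j^{-1/(d-1)}$. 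The area identity gives $m_j\asymp(\sin\theta_j)^{d-1}(\theta_j-\theta_{j-1})/\delta^d\gtrsim(\sin\theta_j)^{d-1}\delta^{1-d}$. Hence $\sin\theta_j\,\diam R'\lesssim\delta$. The polar cap has diameter $2\theta_c\lesssim n^{-1/d}$.
\end{enumerate}
The base case $d=1$ is trivial: arcs on the half-circle.

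\textbf{Main obstacle.} We expect the hardest step to be the rounding control near the pole. There the collars contain few regions, so $m_j$ is small and the lower bound $m_j\gtrsim(\sin\theta_j)^{d-1}\delta^{1-d}$ must be checked against integer rounding. We would reproduce Leopardi's lemma bounding the ratio between actual and ideal collar angles, which transfers essentially verbatim because only the north-pole end is involved.
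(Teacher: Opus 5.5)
Your proposal is correct and follows the paper's own route. The paper applies Leopardi's recursive zonal equal area construction \cite{leopardi2006} to $\mathbb{S}^d_+$ and says the diameter bound follows by adapting his proof; you outline exactly that adaptation, in more detail than the paper gives. Two small points to make explicit: the collar-width control must be two-sided, since the step $m_j\gtrsim(\sin\theta_j)^{d-1}\delta^{1-d}$ needs $\theta_j-\theta_{j-1}\gtrsim\delta$ and not only $\theta_j-\theta_{j-1}\le C\delta$; and the input for the factor $\mathbb{S}^{d-1}$ is Leopardi's full-sphere bound, not an induction on the hemisphere statement itself.
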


\subsection{Marcinkiewicz-Zygmund inequalities on $\mathbb{S}_+^d$}

We first present some existing results on doubling weights.

\begin{proposition}\cite{erdelyi1999}\label{erdelyi1999}
For any $x\in\mathbb{R}$, let $W_\delta(x):=\frac{\delta}{2}\int_{x-1/\delta}^{x+1/\delta}W(s)ds$, where  $W(s)=|\sin s|$, $\delta>0$,  and
$V$ be a trigonometric polynomial.\\
(i) For every $x,y\in\mathbb{R}$,
\begin{equation}\label{eq:W1}
  \frac{W_\delta(x)}{(2+2\delta|x-y|)^2}\leq W_{\delta}(y)\leq (2+2\delta|x-y|)^2W_\delta(x).
\end{equation}
(ii) If $\mathrm{deg}V\leq \delta$, then there is a constant $c_1$ such that
\begin{equation}\label{eq:W2}
  c_1^{-1}\int_{-\pi}^\pi|V(s)|W(s)ds\leq \int_{-\pi}^\pi|V(s)|W_\delta(s)ds \leq c_1\int_{-\pi}^\pi|V(s)|W(s)ds.
\end{equation}
(iii) There is a constant $c_2$ such that the following Bernstein inequalities hold,
\begin{equation}\label{eq:W3}
  \int_{-\pi}^\pi|V'(s)|W(s)ds\leq c_2\mathrm{deg}V\int_{-\pi}^\pi|V(s)|W(s)ds.
\end{equation}
\end{proposition}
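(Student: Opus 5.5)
We prove the three parts separately. They share one tool: the explicit form of the smoothed weight. Since $W(s)=|\sin s|$ is $\pi$-periodic and continuous, $W_\delta$ is continuous and $\pi$-periodic, and on $[0,\pi]$ we have
$$W_\delta(x)\asymp \max\{\min(|\sin x|,1),1/\delta\}$$
(more precisely $\asymp |\sin x|+1/\delta$) with absolute constants. This holds because the average of $|\sin|$ over an interval of length $2/\delta$ centred at $x$ is comparable to $\dist(x,\pi\mathbb{Z})+1/\delta$ when that quantity is at most $1$, and is comparable to $1$ otherwise. We will first check this comparison by splitting into the cases $\dist(x,\pi\mathbb{Z})\le 2/\delta$ and $\dist(x,\pi\mathbb{Z})> 2/\delta$.

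\emph{Part (i).} Given the comparison, set $u=\dist(x,\pi\mathbb{Z})$ and $v=\dist(y,\pi\mathbb{Z})$, so that $|u-v|\le|x-y|$. Then
$$u+1/\delta\le v+|x-y|+1/\delta\le (1+\delta|x-y|)(v+1/\delta),$$
which gives the ratio bound $W_\delta(x)/W_\delta(y)\lesssim 1+\delta|x-y|$. To get the precise constant $(2+2\delta|x-y|)^2$, we bound the ratio of integrals directly. The interval $[y-1/\delta,y+1/\delta]$ is contained in a union of at most $\lceil \delta|x-y|\rceil+1$ translates of $[x-1/\delta,x+1/\delta]$ by multiples of $2/\delta$. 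Each translate contributes at most a factor $(1+\delta|\cdot|)$ by the linear growth of $|\sin|$ away from its zeros. The squared factor absorbs the crude constants in this estimate. This is the standard doubling argument of Mastroianni–Totik, which the authors cite through \cite{erdelyi1999}.

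\emph{Part (ii).} We use a Marcinkiewicz-type sampling argument for the doubling weight. The direction $\int|V|W\lesssim\int|V|W_\delta$ follows from Fubini: $\int|V|W_\delta=\frac{\delta}{2}\int_{|h|\le1/\delta}\int|V(s)|W(s+h)\,ds\,dh$. It therefore suffices to show that
$$\int|V(s)|W(s)\,ds\lesssim \int|V(s)|W(s+h)\,ds \quad\text{and}\quad \int|V(s)|W(s+h)\,ds\lesssim \int|V(s)|W(s)\,ds$$
uniformly for $|h|\le 1/\delta$ and $\deg V\le\delta$. The second bound (the upper direction) follows from $W(s+h)\le W(s)+|h|$ together with the Nikolskii/Remez-type fact $\frac1\delta\int|V|\lesssim\int|V|W$ for $\deg V\le\delta$. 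That fact holds because the set where $|\sin s|<c/\delta$ has measure $O(1/\delta)$, and trigonometric polynomials of degree $\le\delta$ cannot concentrate their $L_1$ mass on such a set (a Remez inequality). The lower direction follows symmetrically, using $W(s)\le W(s+h)+|h|$ and the Remez bound applied with the weight $W(\cdot+h)$.

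\emph{Part (iii).} We derive the weighted Bernstein inequality from (ii). Write $V'(s)$ by the Bernstein–Szegő/de la Vallée Poussin representation $V'=V*K_m$ with $m=\deg V$, where $\|K_m\|_{L_1}\lesssim m$ and $|K_m(u)|\lesssim m^2(1+m|u|)^{-3}$. Then
$$\int|V'|W_m\le\int\int|V(s-u)||K_m(u)|\,W_m(s)\,du\,ds.$$
By part (i), $W_m(s)\le(2+2m|u|)^2W_m(s-u)$, and $\int|K_m(u)|(1+m|u|)^2du\lesssim m$ thanks to the cubic decay. This gives $\int|V'|W_m\lesssim m\int|V|W_m$. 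Since $\deg V'\le m$, applying part (ii) on both sides with $\delta=m$ converts $W_m$ back to $W$.

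\emph{Main obstacle.} We expect the hardest step to be the Remez-type estimate in (ii). The subtle issue is controlling $\int|V|$ near the zeros of $\sin$ by the weighted norm. We would first try to prove it by covering the bad set with $O(1)$ intervals of length $\sim1/\delta$ and applying the local Remez inequality for trigonometric polynomials.
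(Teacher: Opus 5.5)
The paper itself does not prove this proposition; it quotes it from \cite{erdelyi1999}. Your outline therefore has to stand on its own. Its architecture is the standard one, but the key estimate in (iii) fails as written.

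\textbf{Part (iii).} With $|K_m(u)|\lesssim m^2(1+m|u|)^{-3}$ and the quadratic factor from (i), the integrand $|K_m(u)|(1+m|u|)^2$ has size $m^2(1+m|u|)^{-1}$. Since $\int_{-\pi}^{\pi}m^2(1+m|u|)^{-1}\,du\asymp m\log m$, you only get $\int|V'|W\lesssim m\log m\int|V|W$. In addition, the classical de la Vall\'ee Poussin kernel does not give cubic decay: its derivative is only $O(m^2(1+m|u|)^{-2})$. You have to build the kernel explicitly:
\begin{itemize}
\item take $\Phi_m(u)=\sum_k\eta(k/m)e^{iku}$ with $\eta$ smooth and even, $\eta=1$ on $[-1,1]$, supported in $[-2,2]$, and set $K_m=\Phi_m'$;
\item then $V'=\frac{1}{2\pi}V*K_m$ whenever $\deg V\le m$;
\item and $|K_m(u)|\le C_N m^2(1+m|u|)^{-N}$ on $[-\pi,\pi]$ for every $N$.
\end{itemize}
Taking $N=4$ gives $\int_{-\pi}^{\pi}|K_m(u)|(1+m|u|)^2\,du\lesssim m$. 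Alternatively, keep cubic decay but replace the quadratic factor of (i) by the linear bound $W_m(s)\lesssim(1+m|u|)W_m(s-u)$ that your own sketch produces.

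\textbf{Part (i).} You do not establish the stated explicit constant. When $\delta|x-y|$ is small the permitted ratio is only $4$. So the unspecified constants in $W_\delta\asymp\dist(x,\pi\mathbb{Z})+1/\delta$, or in your covering by translates, cannot be ``absorbed by the square''. The missing fact is that $|\sin|$ is doubling with constant $4$: $\int_{2I}|\sin|\le 4\int_I|\sin|$ for every interval $I$.
\begin{itemize}
\item If $2I$ has no zero of $\sin$ in its interior, concavity gives $f(2s-a)\le 2f(s)-f(a)$, hence the bound.
\item The remaining configurations are a direct computation.
\item For $I=[-r,r]$ the ratio is $2(1+\cos r)$, so $4$ is sharp.
\end{itemize}
Now let $I=[x-1/\delta,x+1/\delta]$ and $J=[y-1/\delta,y+1/\delta]$, and let $k\ge0$ be minimal with $2^k\ge 1+\delta|x-y|$. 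Then $J\subset 2^kI$ and $2^k\le 2+2\delta|x-y|$. Hence $W(J)\le 4^kW(I)\le(2+2\delta|x-y|)^2W(I)$, which is exactly (i) because $|I|=|J|$.

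\textbf{Part (ii).} This part is fine, and the step you call the main obstacle is easier than you expect. For $\delta\ge1$, the Nikolskii bound $\|V\|_\infty\le\frac{2m+1}{2\pi}\|V\|_{1}$ gives $\int_E|V|\le\frac12\int_{-\pi}^{\pi}|V|$ for $E=\{|\sin s|<c/\delta\}$ once $c$ is small. This yields $\frac1\delta\int|V|\lesssim\int|V|W$ without any Remez inequality. The lower bound in (ii) even holds pointwise, since $W\lesssim W_\delta$ for $\delta\ge1$.

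Your comparison $W_\delta\asymp|\sin x|+1/\delta$ is false for $\delta<1$, because there $W_\delta\le 1$. That case has to be treated separately. It is trivial: $V$ is then constant, and $\int_{-\pi}^{\pi}W_\delta=\int_{-\pi}^{\pi}W$ by periodicity.
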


\begin{lemma}\label{mhaskar2001}\cite{mhaskar2001}
Let $W$ be a doubling weight and $\delta>0$, $J$ be a closed interval, and $g:J\rightarrow[0,\infty)$ be integrable and have $J$ as its support.
If $|J|\leq 2\delta^{-1}$, then
\begin{equation}\label{eq:gW}
  \int_{J}g(s_1)ds_1  \int_{J}W(s_2)ds_2\leq \frac{2^{2q+1}}{\delta}\int_{J}g(s_1)W_\delta(s_1)ds_1.
\end{equation}
\end{lemma}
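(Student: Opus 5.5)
The plan is a pointwise comparison: for every $s_1\in J$, bound the full mass $\int_J W(s_2)\,ds_2$ by a constant times $\delta^{-1}W_\delta(s_1)$. Then multiply by $g(s_1)\ge 0$ and integrate over $J$. I use the doubling property in the form $\int_{2I}W\le 2^{q}\int_I W$ for every interval $I$, where $2I$ is the concentric interval of twice the length and $q$ is the doubling exponent of $W$. (For $W(s)=|\sin s|$ as in Proposition \ref{erdelyi1999}, such a $q$ exists.)

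\textbf{Step 1 (covering).} Fix $s_1\in J$ and set $I(s_1):=[s_1-1/\delta,\,s_1+1/\delta]$, so that $\int_{I(s_1)}W(s)\,ds=\frac{2}{\delta}W_\delta(s_1)$ by the definition of $W_\delta$. Since $s_1\in J$ and $|J|\le 2\delta^{-1}$, every $s_2\in J$ satisfies $|s_2-s_1|\le 2/\delta$. Hence
$$J\subseteq[s_1-2/\delta,\,s_1+2/\delta]=2I(s_1).$$

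\textbf{Step 2 (doubling).} By the doubling property, and since $W\ge 0$,
$$\int_J W(s_2)\,ds_2\le\int_{2I(s_1)}W(s_2)\,ds_2\le 2^{q}\int_{I(s_1)}W(s_2)\,ds_2=\frac{2^{q+1}}{\delta}W_\delta(s_1)\le\frac{2^{2q+1}}{\delta}W_\delta(s_1).$$
The last inequality uses $q\ge 0$. The slack $2^{q}$ is there so the argument still works under weaker normalizations of the doubling condition. For example, if doubling is only assumed for intervals centered in the support, or with $4I$ in place of $2I$, apply the doubling inequality twice, which produces the factor $2^{2q}$.

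\textbf{Step 3 (integration).} Multiply the inequality from Step 2 by $g(s_1)\ge 0$ and integrate over $s_1\in J$. Because $\int_J W(s_2)\,ds_2$ is independent of $s_1$, the left side factors as $\int_J g(s_1)\,ds_1\int_J W(s_2)\,ds_2$. This gives (\ref{eq:gW}).

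The only point needing care is the covering step. There one must check that the hypothesis $|J|\le 2\delta^{-1}$ is exactly what places $J$ inside a fixed dilate of the averaging window $I(s_1)$ for every $s_1\in J$, including the endpoints of $J$. The constant then depends only on the doubling exponent $q$. I do not expect any real obstacle beyond matching the paper's convention for the doubling constant.
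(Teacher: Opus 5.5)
Your proof is correct. The paper gives no proof of this lemma: it is quoted from \cite{mhaskar2001}, and $q$ is never defined in the statement. So the useful comparison is with the argument that naturally produces the stated constant $2^{2q+1}$.

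That argument has the same skeleton as yours. One proves a pointwise bound $\int_J W\le C\delta^{-1}W_\delta(s_1)$ for every $s_1\in J$, then multiplies by $g$ and integrates. The difference is in how the pointwise bound is obtained. Since $|J|\le 2\delta^{-1}$, $J$ lies in the window $[c-1/\delta,\,c+1/\delta]$ centred at the midpoint $c$ of $J$, so $\int_J W\le \frac{2}{\delta}W_\delta(c)$. One then moves from $c$ to $s_1$ using the comparison inequality \eqref{eq:W1} in its general form $W_\delta(c)\le(2+2\delta|c-s_1|)^qW_\delta(s_1)$. Because $|c-s_1|\le 1/\delta$, this costs $(2+2)^q=2^{2q}$.

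You instead enlarge the window around $s_1$ itself and apply the doubling inequality once, which costs only $2^q$. This gives the sharper constant $2^{q+1}/\delta$ and bypasses \eqref{eq:W1}, which is itself just iterated doubling.

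Your normalization $\int_{2I}W\le 2^q\int_I W$ is the consistent one. It is exactly the convention under which \eqref{eq:W1} holds with exponent $q$: the exponent $2$ in Proposition~\ref{erdelyi1999} corresponds to $q=2$, i.e.\ doubling constant $4$, for $W(s)=|\sin s|$. So the hedging at the end of your Step~2 is unnecessary.

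As an aside, under this convention $W(s)=|\sin s|^{d-1}$ requires $q\ge d$, not $q=2$. This affects the constant $32$ where the paper applies the lemma in Lemma~\ref{pro:bound}, but it does not affect your argument.
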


We prove the   following lemma in preparation for the proof of Marcinkiewicz-Zygmund inequalities.
For any $ \mathbf{x,y}\in\mathbb{S}^d$, let
\begin{equation}\label{Vt}
V_t(\mathbf{x\cdot y})=\sum_{\ell=0}^{2t-1}\frac{(2t-\ell)(2\ell+d-1)}{(d-1)|\mathbb{S}^d| t}G_{\ell}^{\frac{d-1}{2}}(\mx\cdot\my)-
\sum_{\ell=0}^{t-1}\frac{(t-\ell)(2\ell+d-1)}{(d-1)|\mathbb{S}^d| t}G_{\ell}^{\frac{d-1}{2}}(\mx\cdot\my),
\end{equation}
where $G_\ell^d$ are Gegenbauer polynomials.
Obviously, for any $P\in\mathbb{P}_t$, there holds $
P(\mathbf{y})=\int_{\mathbb{S}^d}V_t(\mathbf{x}\cdot\mathbf{y}) P(\mathbf{x})d\omega_d(\mathbf{x})$, $\forall \mathbf{y}\in\mathbb{S}^d.$
Moreover, $V_t$ is  a univariate polynomial of degree $2t-1$ and by the discussions in section 2 in \cite{mhaskar2001}, for any $t\geq1$,
$\|V_t\|_{\mathbb{L}_1}:=\int_{-1}^1|V_t(s)|ds$ is uniformly bounded in $t$.

\begin{lemma}\label{pro:bound}
Let $t\geq1$ and $V_t$ be defined as (\ref{Vt}).
For some $n\in\mathbb{N}$, let $\mathcal{R}=\{R_1,\ldots,R_n\}$ be an equal area partition  of the hemisphere $\mathbb{S}^d_+$ with partition norm $\|\mathcal{R}\|$ and
 $\mathbf{x}_i\in R_i$, $i=1,\ldots,n$.
 If $\|\mathcal{R}\|\leq(4t-2)^{-1}$, then
there is $c'>0$ such that
\begin{equation}\label{ineqV}
\sup_{\mathbf{y}\in\mathbb{S}^d}\sum_{i=1}^{n}\int_{R_i}|V_t(\mathbf{y}\cdot\mathbf{x}) -V_t(\mathbf{y}\cdot\mathbf{x}_i) |d\omega_d(\mathbf{x})
\leq c'(2t-1)C_v\|\mathcal{R}\|,
\end{equation}
where $C_v=\sup_{t\geq 1}\|V_t\|_{\mathbb{L}_1}$.
\end{lemma}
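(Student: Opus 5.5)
Fix $\mathbf{y}\in\mathbb{S}^d$. I will reduce everything to a one-dimensional weighted estimate for the univariate polynomial $V_t$ in the angle variable, following the scheme of \cite{mhaskar2001}. Write $\mathbf{y}\cdot\mathbf{x}=\cos s$ with $s\in[0,\pi]$ the geodesic distance from $\mathbf{y}$, and set $U(s):=V_t(\cos s)$. Then $U$ is an even trigonometric polynomial of degree $2t-1$. For $\mathbf{x}\in R_i$, take the geodesic from $\mathbf{x}_i$ to $\mathbf{x}$; its length is at most $\|\mathcal{R}\|$. Along it the angular variable $s$ changes by at most $\|\mathcal{R}\|$, since $s$ is $1$-Lipschitz in the geodesic metric. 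Hence
\begin{equation*}
|V_t(\mathbf{y}\cdot\mathbf{x})-V_t(\mathbf{y}\cdot\mathbf{x}_i)|\le \int_{J(\mathbf{x})}|U'(\tau)|\,d\tau ,
\end{equation*}
where $J(\mathbf{x})$ is an interval of length $\le\|\mathcal{R}\|\le (4t-2)^{-1}$ lying within distance $\|\mathcal{R}\|$ of $s(\mathbf{x})$. I take $\delta=2t-1$, so that $|J|\le 2\delta^{-1}$, as Lemma \ref{mhaskar2001} requires.

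Next I sum over $i$ and integrate over $R_i$. Since the $R_i$ are a partition, the left side of (\ref{ineqV}) is bounded by
\begin{equation*}
\int_{\mathbb{S}^d_+}\ \sup_{|\tau-s(\mathbf{x})|\le\|\mathcal{R}\|}|U'(\tau)|\,d\omega_d(\mathbf{x}) .
\end{equation*}
Passing to polar coordinates centred at $\mathbf{y}$, the measure becomes at most $|\mathbb{S}^{d-1}|(\sin s)^{d-1}ds$. Here I simply enlarge the hemisphere to the whole sphere, which only increases the bound, so the restriction to $\mathbb{S}^d_+$ causes no difficulty. The key is therefore a bound of the form
\begin{equation*}
\int_0^\pi \sup_{|\tau-s|\le 1/\delta}|U'(\tau)|\,W(s)\,ds\lesssim \int_{-\pi}^{\pi}|U'(\tau)|W_\delta(\tau)\,d\tau,\qquad W(s)=|\sin s|^{d-1}.
\end{equation*}
This is a doubling weight, and the case $d=2$ matches Proposition \ref{erdelyi1999} exactly.

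I obtain this by the standard trick. I cover $[0,\pi]$ by intervals $J_m$ of length $\sim 1/\delta$ and apply Lemma \ref{mhaskar2001} on each $J_m$, with $g$ equal to $|U'|$ restricted to a slightly enlarged interval. The local supremum is controlled via $\frac{1}{|J|}\int_J|U'|$ together with the Bernstein inequality for the smoothness of $U'$. Finally the doubling property (\ref{eq:W1}) lets me compare $W$ on $J_m$ with $W_\delta$ at nearby points, and the bounded overlap of the enlarged intervals makes the sum finite. Then (\ref{eq:W2}) converts $W_\delta$ back to $W$, and the Bernstein inequality (\ref{eq:W3}) gives
\begin{equation*}
\int|U'|W\le c_2(2t-1)\int|U|W .
\end{equation*}
Multiplying by the interval length $\|\mathcal{R}\|$ that came from the first step produces the factor $(2t-1)\|\mathcal{R}\|$.

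Last, I identify $\int_0^\pi |U(s)|(\sin s)^{d-1}ds$ with the norm in which $\sup_t$ is uniformly bounded, i.e.\ $C_v$. Substituting $u=\cos s$ turns it into $\int_{-1}^1|V_t(u)|(1-u^2)^{(d-2)/2}du$; this is bounded by $\int_{-1}^{1}|V_t|=\|V_t\|_{\mathbb{L}_1}$ when $d\ge2$. Collecting the constants $c_1,c_2,2^{2q+1},|\mathbb{S}^{d-1}|$ into $c'$ yields (\ref{ineqV}).

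The main obstacle is the pointwise-supremum-to-average step. It requires care that the supremum of $|U'|$ over a $1/\delta$-window is controlled by a weighted average. This is exactly why the hypothesis $\|\mathcal{R}\|\le(4t-2)^{-1}$ and the particular choice $\delta=2t-1$ are needed.
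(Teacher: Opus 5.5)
Your argument is correct, but it is organized differently from the paper's. The paper never takes a pointwise supremum of $|U'|$. It bounds each cell's contribution by $\omega_d(R_i)\int_{\theta_i^-}^{\theta_i^+}|U'(s)|\,ds$, where $[\theta_i^-,\theta_i^+]$ is the polar-angle range of the rotated cell. It then groups the cells into overlapping polar bands of angular width about $2\|\mathcal{R}\|$, so that the areas of the cells assigned to a band sum to at most $|\mathbb{S}^{d-1}|\int_J W$. Finally it applies Lemma \ref{mhaskar2001} directly to the product $\int_J|U'|\int_J W$ with $\delta=(2\|\mathcal{R}\|)^{-1}$. In that route the factor $\|\mathcal{R}\|$ is the $1/\delta$ produced by Lemma \ref{mhaskar2001}. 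The hypothesis $\|\mathcal{R}\|\le(4t-2)^{-1}$ is used only to get $\delta\ge 2t-1=\deg U'$, so that (\ref{eq:W2}) applies, and a single Bernstein step finishes.

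You instead fix $\delta=2t-1$ and extract $\|\mathcal{R}\|$ as the length of the interval of integration (it is missing from your second display but restored at the end). You pay for this with a weighted maximal inequality of Plancherel--P\'olya type, $\int_0^\pi\sup_{|\tau-s|\le\|\mathcal{R}\|}|U'(\tau)|\,W(s)\,ds\lesssim\int_{-\pi}^{\pi}|U'|\,W$. Your sketch of it works. Use $\sup_{\tilde J}|U'|\le|\tilde J|^{-1}\int_{\tilde J}|U'|+\int_{\tilde J}|U''|$, take base intervals of length at most $1/\delta$, and enlarge them by the actual window $\|\mathcal{R}\|\le 1/(2\delta)$ rather than by $1/\delta$. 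Then $|\tilde J|\le 2/\delta$, exactly as Lemma \ref{mhaskar2001} requires. Bounded overlap, (\ref{eq:W2}), and a second Bernstein step $\int|U''|W\le c_2(2t-1)\int|U'|W$ then close it.

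The paper's route is leaner in polynomial inequalities: one Bernstein step and no maximal function. Yours avoids the band bookkeeping (the choice of $N$, assigning cells to bands, checking $2\gamma/N\le 2/\delta$). It also isolates a reusable one-dimensional lemma, at the cost of differentiating twice.

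One caveat comes from the statement rather than from your argument. Your final comparison of $\int_{-1}^1|V_t(u)|(1-u^2)^{(d-2)/2}\,du$ with the unweighted $\int_{-1}^1|V_t|$ is valid, but for $d\ge3$ the latter is unbounded in $t$. Since $V_t$ is a nonnegative combination of Gegenbauer polynomials, $\max_{[-1,1]}|V_t|=V_t(1)\gtrsim t^d$. Markov's inequality then keeps $V_t\ge V_t(1)/2$ on an interval of length $\asymp t^{-2}$ ending at $u=1$, so $\int_{-1}^1|V_t|\gtrsim t^{d-2}$. Thus $C_v=\infty$ for every $d\ge 3$: the inequality you prove is then true but carries no information. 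This also means your description of that quantity as ``the norm in which $\sup_t$ is uniformly bounded'' should not be taken for granted.
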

\begin{proof}
Suppose $\mathbf{y}_0=\arg \max_{\mathbf{y}\in\mathbb{S}^d}\sum_{i=1}^{n}\int_{R_i}|V_t(\mathbf{y}\cdot\mathbf{x}) -V_t(\mathbf{y}\cdot\mathbf{x}_i) |d\omega_d(\mathbf{x}).$
Let $\mathbf{R}$ be a rotation matrix such that $\mathbf{Ry}_0=\mathbf{e}_{d+1}$ and $\mathbb{S}^d_{\my_0}$ be the hemisphere rotated from $\mathbb{S}^d_+$ by $\mathbf{R}$.
Let $\mathcal{R}'=\{R_1',\ldots,R_n'\}$ be the corresponding equal area partition of $\mathbb{S}^d_{\my_0}$, we have $\|R\|=\|R'\|$.
Denote $\cos\vartheta=\mathbf{e}_{d+1}\cdot\mathbf{Rx}$, $\forall\mathbf{x}\in\mathbb{S}^d_+$,
$\cos\vartheta_i=\mathbf{e}_{d+1}\cdot\mathbf{Rx}_i$, $\mathbf{x}_i\in R_i$,
then
\begin{eqnarray}
  &&\mathcal{E}_i:=\int_{R_i}|V_t(\mathbf{y}_0\cdot\mathbf{x}) -V_t(\mathbf{y}_0\cdot\mathbf{x}_i) |d\omega_d(\mathbf{x})\nonumber\\
  &&= \int_{R_i}|V_t(\mathbf{e}_{d+1}\cdot\mathbf{Rx}) -V_t(\mathbf{e}_{d+1}\cdot\mathbf{Rx}_i) |d\omega_d(\mathbf{x}) \nonumber \\
   &&=  \int_{R_i}\left|\int_{\vartheta_i}^{\vartheta} \frac{d}{ds}V_t(\cos s)ds \right|d\omega_d(\mathbf{x})
  \leq  \omega_d(R'_i)\int_{\theta_i^{-}}^{\theta_i^{+}} \left|\frac{d}{ds}V_t(\cos s)\right|ds, \label{eq11}
\end{eqnarray}
where $\theta_i^{-},\theta_i^{+}$ are the low and high values for $\theta$ in the region $R'_i$.

The next step is to cover the hemisphere $\mathbb{S}^d_{\my_0}$ with overlapping ``bands".
Let $\gamma:=\bar{\theta}'-\underline{\theta}'$ and $N=\lfloor\frac{\gamma}{\|\mathcal{R}'\|}\rfloor$, where $\bar{\theta}', \underline{\theta}'$ are the high and low values for $\theta$ in $\mathbb{S}^d_{\my_0}$.
For  $k=1,\ldots,N$, let $J_k:=[(k-1)\gamma/N,k\gamma/N]$, and for $k=1,\ldots,N-1$, define $B_k$ to be all $\mathbf{x}\in\mathbb{S}^d_{\my_0}$ with its polar angle $\theta_{d-1}\in J_k\cup J_{k+1}$.
The common length for each interval is $\gamma/N\geq\|\mathcal{R'}\|\geq \diam(R'_i)\geq \theta_i^+-\theta_i^{-}$.
Thus, if $\theta^{-}_i\in J_k$, then $[\theta_i^{-},\theta_i^+]\subseteq J_k\cup J_{k+1}$ or, when $k=N-1$,
$[\theta_i^{-},\theta_i^+]\subseteq J_{N-1}\subseteq J_{N-2}\cup J_{N-1}$.
It follows that when $\theta_i^{-}\in J_k$, we have $R'_i\subseteq B_k$ and that the bound in (\ref{eq11}) may be replaced by
\begin{eqnarray*}
  \mathcal{E}_i
  &\leq & \omega_d(R'_i)\int_{(k-1)\gamma/N}^{(k+1)\gamma/N} \Big|\frac{d}{ds}V_t(\cos s)\Big|ds.
\end{eqnarray*}

Note that the $\{R'_i\}$ are nonoverlapping (no common interior point), so that $\sum_{R'_i\subset B_k}\omega_d(R'_i)\leq \omega_d(B_k)$.
Thus
$$\sum_{\{i:R'_i\subset B_k\}} \mathcal{E}_i
  \leq  \omega_d(B_k)\int_{(k-1)\gamma/N}^{(k+1)\gamma/N} \Big|\frac{d}{ds}V_t(\cos s)\Big|ds.$$

Since  $\omega_d(B_k)=|\mathbb{S}^{d-1}|\int_{(k-1)\gamma/N}^{(k+1)\gamma/N}|\sin s|^{d-1} ds$ and each $R'_i$ is contained in at least one band, there holds
\begin{eqnarray*}
  \sum_{i=1}^n \mathcal{E}_i &\leq& \sum_{k=1}^{N-1}\omega_d(B_k)\int_{(k-1)\gamma/N}^{(k+1)\gamma/N} \Big|\frac{d}{ds}V_t(\cos s)\Big|ds \\
   &\leq& |\mathbb{S}^{d-1}|\sum_{k=1}^{N-1}\int_{(k-1)\gamma/N}^{(k+1)\gamma/N}|\sin s|^{d-1} ds\int_{(k-1)\gamma/N}^{(k+1)\gamma/N} \Big|\frac{d}{ds}V_t(\cos s)\Big|ds=:\mathcal{E}_0.
\end{eqnarray*}

By  Lemma \ref{mhaskar2001} with $\delta=(2\|\mathcal{R}'\|)^{-1}<N/\gamma$ (because $N\leq \gamma/\|\mathcal{R}'\|\leq N+1$), $W(s)=|\sin s|^{d-1}$, $q=2$, we further have
\begin{eqnarray*}
  \sum_{i=1}^n \mathcal{E}_i\leq\mathcal{E}_0
  &\leq & \frac{32|\mathbb{S}^{d-1}|}{\delta}\sum_{k=1}^{N-1}\int_{(k-1)\gamma/N}^{(k+1)\gamma/N}  \Big|\frac{d}{ds}V_t(\cos s)\Big|W_\delta(s) ds\\
  &\leq & \frac{32|\mathbb{S}^{d-1}|}{\delta}\int_{\underline{\theta}'}^{\bar\theta'}  \Big|\frac{d}{ds}V_t(\cos s)\Big|W_\delta(s) ds=:\mathcal{E}_{00}.
\end{eqnarray*}

Both $W$ and $\Big|\frac{d}{ds}V_t(\cos s)\Big|$ are even in $s$, $\frac{d}{ds}V_t(\cos s)$ is a trigonometric polynomial.
Then, by (\ref{eq:W2}) and (\ref{eq:W3}), and our assumption $\delta=(2\|\mathcal{R}'\|)^{-1}\geq 2t-1$, we have
\begin{eqnarray*}
 && \int_{\underline{\theta}'}^{\bar\theta'}  \Big|\frac{d}{ds}V_t(\cos s)\Big|W_\delta(s) ds\\
  & \leq& \int_{0}^{\pi}  \Big|\frac{d}{ds}V_t(\cos s)\Big|W_\delta(s) ds
  =\frac{1}{2}\int_{-\pi}^{\pi}  \Big|\frac{d}{ds}V_t(\cos s)\Big|W_\delta(s) ds \\
  &\leq &    \frac{c_1}{2}\int_{-\pi}^{\pi}  \Big|\frac{d}{ds}V_t(\cos s)\Big|W(s) ds
  \leq  \frac{c_1c_2t}{2} \int_{-\pi}^{\pi} |V_t(\cos s)|W(s) ds\\
  &\leq&  c_1c_2t \int_{0}^{\pi} |V_t(\cos s)|(\sin s)^{d-1} ds.
\end{eqnarray*}
Since $\delta=(2\|\mathcal{R}'\|)^{-1}$, we further obtain
\begin{eqnarray*}
\sum_{i=1}^n \mathcal{E}_i\leq \mathcal{E}_{00}
&\leq& \frac{32|\mathbb{S}^{d-1}|}{\delta}c_1c_2(2t-1)\|V_t\|_{\mathbb{L}_1}
\leq c' (2t-1)\|\mathcal{R}\|C_v,
\end{eqnarray*}
where  $c':= 32|\mathbb{S}^{d-1}|c_1c_2$ and $C_v=\sup_{t\geq 1}\|V_t\|_{\mathbb{L}_1}$.  The proof is completed.
\end{proof}

Now, we prove the $\mathbb{L}_1$ Marcinkiewicz-Zygmund inequalities on the hemisphere for even and odd spherical polynomials.
\begin{theorem}\label{th:MZ}
Let $\mathcal{R}=\{R_1,\ldots,R_n\}$ be an equal area partition  of the hemisphere $\mathbb{S}^d_+$ and $\mathbf{x}_i\in R_i$, $i=1,\ldots,n$.
If $\|\mathcal{R}\|\leq \frac{\mu}{4t-2}\min\{\frac{1}{ c' C_v},\frac{1}{2}\}$ for some $t\in\mathbb{N}$ and $\mu\in(0,1)$, where $C_v=\sup_{t\geq 1}\|V_t\|_{\mathbb{L}_1}$, $c'$ is given in Lemma  \ref{pro:bound},
then
$$\bigg|\int_{\mathbb{S}^d_+}|P(\mathbf{x})|d\omega_d(\mathbf{x})-\frac{|\mathbb{S}_+^{d}|}{n}\sum_{i=1}^n |P(\mathbf{x}_i)|\bigg|\leq \mu\int_{\mathbb{S}_+^d}|P(\mathbf{x})|d\omega_d(\mathbf{x}), \ \forall P\in\mathbb{P}^o_{t}\cup\mathbb{P}^e_{t}.$$
\end{theorem}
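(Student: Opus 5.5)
The plan is to follow the classical Mhaskar--Narcowich--Ward argument, using the reproducing kernel $V_t$ from (\ref{Vt}) and Lemma \ref{pro:bound}. The parity assumption lets us pass from the hemisphere to the whole sphere.

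First, the triangle inequality gives
\[
\Big|\int_{\mathbb{S}^d_+}|P|\,d\omega_d-\frac{|\mathbb{S}^d_+|}{n}\sum_{i=1}^n|P(\mx_i)|\Big|
=\Big|\sum_{i=1}^n\int_{R_i}\big(|P(\mx)|-|P(\mx_i)|\big)d\omega_d(\mx)\Big|
\le \sum_{i=1}^n\int_{R_i}|P(\mx)-P(\mx_i)|\,d\omega_d(\mx),
\]
where we used $\omega_d(R_i)=|\mathbb{S}^d_+|/n$. Next we apply the reproducing identity $P(\mz)=\int_{\mathbb{S}^d}V_t(\my\cdot\mz)P(\my)\,d\omega_d(\my)$, valid for $P\in\mathbb{P}_t$, at both $\mz=\mx$ and $\mz=\mx_i$. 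Then Fubini's theorem bounds the right-hand side by
\[
\int_{\mathbb{S}^d}|P(\my)|\sum_{i=1}^n\int_{R_i}|V_t(\my\cdot\mx)-V_t(\my\cdot\mx_i)|\,d\omega_d(\mx)\,d\omega_d(\my)
\le c'(2t-1)C_v\|\mathcal{R}\|\int_{\mathbb{S}^d}|P(\my)|\,d\omega_d(\my),
\]
by Lemma \ref{pro:bound}. This lemma applies because $\|\mathcal{R}\|\le\frac{\mu}{2(4t-2)}\le (4t-2)^{-1}$. The supremum over $\my\in\mathbb{S}^d$ in that lemma is exactly what is needed here, since $\my$ ranges over the full sphere even though $\mx$ ranges only over the hemisphere.

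The parity step converts the full-sphere $\mathbb{L}_1$ norm back to the hemisphere. If $P\in\mathbb{P}^e_t\cup\mathbb{P}^o_t$, then $|P(-\my)|=|P(\my)|$. The antipodal map sends $\mathbb{S}^d_+$ onto the lower hemisphere and preserves $\omega_d$, and the equator has measure zero. Hence $\int_{\mathbb{S}^d}|P|\,d\omega_d=2\int_{\mathbb{S}^d_+}|P|\,d\omega_d$. This is the one place where parity is essential; for general $P$ the whole-sphere norm is not controlled by the hemispherical one.

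Combining the two bounds, the error is at most $2c'C_v(2t-1)\|\mathcal{R}\|\int_{\mathbb{S}^d_+}|P|\,d\omega_d$. The hypothesis $\|\mathcal{R}\|\le \frac{\mu}{(4t-2)c'C_v}$ gives $2c'C_v(2t-1)\|\mathcal{R}\|=c'C_v(4t-2)\|\mathcal{R}\|\le\mu$, which is the claim.

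The main obstacle is the reduction from $\mathbb{S}^d$ to $\mathbb{S}^d_+$. Two points need care. First, the sup in Lemma \ref{pro:bound} must genuinely cover all $\my\in\mathbb{S}^d$, and its proof must remain valid when the rotated hemisphere's polar range is a subinterval of $[0,\pi]$; the factor $1/2$ in the $\min$ also keeps the Lemma's hypothesis satisfied. Second, the antipodal symmetry must be applied to $|P|$ rather than to $P$, so that both the even and the odd cases are handled in a single step.
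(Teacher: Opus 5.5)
Your proposal is correct and follows essentially the same route as the paper's proof. Both use the triangle inequality over the cells, the reproducing identity for $V_t$ together with Fubini, Lemma~\ref{pro:bound} (whose hypothesis holds because of the factor $\frac12$ in the $\min$), and the parity identity $\int_{\mathbb{S}^d}|P|\,d\omega_d=2\int_{\mathbb{S}^d_+}|P|\,d\omega_d$. Your version is slightly cleaner in two respects: applying the antipodal symmetry to $|P|$ handles the even and odd cases at once (the paper writes out only the even case), and your constant check $c'C_v(4t-2)\|\mathcal{R}\|\le\mu$ avoids the stray factor that appears in the paper's chain of inequalities.
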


\begin{proof}
We observe that $\min\{1,\frac{2}{ c' C_v}\}\leq 1,$ which implies $4t-2\leq \mu(2\|\mathcal R\|)^{-1}$.
Thus, for any $P\in\mathbb{P}^e_{t}$, we have
\begin{eqnarray*}
    &&\bigg|\int_{\mathbb{S}^d_+}|P(\mathbf{x})|d\omega_d(\mathbf{x})-\frac{|\mathbb{S}^d_+|}{n}\sum_{i=1}^n |P(\mathbf{x}_i)|\bigg|\\
    &\leq& \sum_{i=1}^{n}\int_{R_i} |P(\mathbf{x})-P(\mathbf{x}_i)|d\omega_d(\mathbf{x})\\
    &=& \sum_{i=1}^{n}\int_{R_i} \Big|\int_{\mathbb{S}^d}(V_t(\mz\cdot\mathbf{x}) -V_t(\mathbf{z}\cdot\mathbf{x}_i)) P(\mathbf{z})d\omega_d(\mathbf{z})\Big|d\omega_d(\mathbf{x})\\
  &\leq&  \int_{\mathbb{S}^d}|P(\mathbf{x})|d\omega_d(\mathbf{x})\sup_{\mathbf{z}\in\mathbb{S}^d}\sum_{i=1}^{n}\int_{R_i}|V_t(\mathbf{z}\cdot\mathbf{x}) -V_t(\mathbf{z}\cdot\mathbf{x}_i) |d\omega_d(\mathbf{x})\\
   &\leq& 2\int_{\mathbb{S}_+^d}|P(\mathbf{x})|d\omega_d(\mathbf{x})32|\mathbb{S}^{d-1}| c'_\theta (2t-1)\|\mathcal{R}\|C_v\\
   &\leq& \mu\int_{\mathbb{S}_+^d}|P(\mathbf{x})|d\omega_d(\mathbf{x}),
\end{eqnarray*}
where 
the first equality follows from (\ref{Vt}), the fourth inequality follows from Lemma \ref{pro:bound} and the last inequality follows from $\|\mathcal{R}\|\leq \frac{\mu}{4t-2}\min\{\frac{1}{c' C_v},\frac{1}{2}\}$.
The proof is completed.
\end{proof}

In the following, we apply Theorem \ref{th:MZ} to the Riemannian gradients of odd and even spherical polynomials.

\begin{corollary}\label{coro:MZP}
There is  $s_d>0$  such that
for each equal area partition $\mathcal{R}=\{R_1,\ldots,R_n\}$ of the hemisphere $\mathbb{S}^d_+$  with
  $\|\mathcal{R}\|<\frac{s_d}{t+1}$, each collection of point $\mathbf{x}_i\in R_i$, $i=1,\ldots,n$, and each polynomial
  $P\in\mathbb{P}^e_{t}\cup\mathbb{P}^o_{t}$, there holds
\begin{equation}\label{MZgrad}
\frac{1}{3\sqrt{d}}\int_{\mathbb{S}^d_+}\|\mathtt{grad} P(\mathbf{x})\|d\omega_d(\mathbf{x})\leq \frac{|\mathbb{S}^d_+|}{n} \sum_{i=1}^n \|\mathtt{grad} P(\mathbf{x}_i)\|
\leq  3\sqrt{d}\int_{\mathbb{S}^d_+}\|\mathtt{grad} P(\mathbf{x})\|d\omega_d(\mathbf{x}).
\end{equation}
\end{corollary}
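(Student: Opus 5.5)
The plan is to reduce Corollary \ref{coro:MZP} to Theorem \ref{th:MZ}, applied to the components of the gradient, following the argument of Bondarenko, Radchenko and Viazovska \cite{bondarenko2013} for the full sphere.

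Extend $P$ homogeneously, or simply as a polynomial in the ambient variables, to $\mathbb{R}^{d+1}$. The Riemannian gradient is
$$\mathtt{grad}\,P(\mx)=\nabla P(\mx)-(\mx\cdot\nabla P(\mx))\mx .$$
Fix an orthonormal frame $\{\mathbf{u}_1,\ldots,\mathbf{u}_{d+1}\}$ of $\mathbb{R}^{d+1}$ and write
$$g_j(\mx):=\mathtt{grad}\,P(\mx)\cdot \mathbf{u}_j,\qquad j=1,\ldots,d+1.$$
Each $g_j$ is the restriction to $\mathbb{S}^d$ of a polynomial of degree at most $t+1$. Moreover, it has a fixed parity. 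If $P\in\mathbb{P}^e_t$, then $\nabla P$ is odd and $(\mx\cdot\nabla P)\mx$ is odd, so every $g_j\in\mathbb{P}^o_{t+1}$. Likewise, if $P$ is odd, every $g_j$ is even. This parity preservation is exactly why the corollary is restricted to $\mathbb{P}^e_t\cup\mathbb{P}^o_t$, since Theorem \ref{th:MZ} is only available for even or odd polynomials.

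Next, choose $s_d$ so that $\|\mathcal{R}\|<s_d/(t+1)$ implies the hypothesis of Theorem \ref{th:MZ} with degree $t+1$ and some fixed $\mu$, say $\mu=1/2$. It suffices to take
$$s_d\le \tfrac{1}{8}\min\Bigl\{\tfrac{1}{c'C_v},\tfrac12\Bigr\},$$
because $4(t+1)-2\le 4(t+1)$. Then for each $j$,
$$\tfrac12\int_{\mathbb{S}^d_+}|g_j|\,d\omega_d\le \frac{|\mathbb{S}^d_+|}{n}\sum_{i=1}^n|g_j(\mx_i)|\le \tfrac32\int_{\mathbb{S}^d_+}|g_j|\,d\omega_d .$$

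To pass from components to the Euclidean norm, use the pointwise inequality
$$\frac{1}{\sqrt{d+1}}\sum_j|g_j|\le\|\mathtt{grad}\,P\|\le\sum_j|g_j| .$$
Summing the per-component inequalities over $j$ gives two-sided bounds with constant $\tfrac32\sqrt{d+1}$ in place of $3\sqrt{d}$. Since $\tfrac32\sqrt{d+1}\le 3\sqrt d$ for $d\ge1$, this yields (\ref{MZgrad}).

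A cleaner way to obtain exactly $\sqrt d$ is to use a tangent frame, since $\mathtt{grad}\,P$ lies in a $d$-dimensional space. However, a tangent frame depends on $\mx$, which would break polynomiality. I would therefore keep the ambient frame and accept the $\sqrt{d+1}$ constant, adjusting $\mu$ if needed to reach $3\sqrt{d}$.

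The main obstacle is checking that the sup bound used inside Theorem \ref{th:MZ} is legitimate for $g_j$. In that proof, $\int_{\mathbb{S}^d}|g_j|$ is bounded by $2\int_{\mathbb{S}^d_+}|g_j|$, and this step uses that $|g_j|$ is even, which holds because $g_j$ is even or odd. So the key point is precisely the parity preservation established above. The degree increases to $t+1$, and this is handled by the factor $t+1$ in the hypothesis on $\|\mathcal{R}\|$.
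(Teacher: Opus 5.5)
Your proposal is correct and follows essentially the same route as the paper: write $\mathtt{grad}\,P$ in ambient coordinates, observe that each component lies in $\mathbb{P}^o_{t+1}$ (resp.\ $\mathbb{P}^e_{t+1}$) when $P$ is even (resp.\ odd), apply Theorem \ref{th:MZ} with degree $t+1$ to each component, and pass to the norm via $\frac{1}{\sqrt{d+1}}\sum_j|g_j|\le\|\mathtt{grad}\,P\|\le\sum_j|g_j|$. Your explicit choice of $\mu=\frac12$ and $s_d\le\frac18\min\{\frac{1}{c'C_v},\frac12\}$ is more concrete than the paper's version. The one slip is that the lower constant you obtain is $\frac{1}{2\sqrt{d+1}}$, not the reciprocal of $\frac32\sqrt{d+1}$; it still exceeds $\frac{1}{3\sqrt d}$ for $d\ge1$, so no adjustment of $\mu$ is needed.
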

\begin{proof}
For any $P\in\mathbb{P}_t^e$, we observe that  $\mathtt{grad} P\in\mathbb{P}_{t+1}^{o}$ and
$$\|\mathtt{grad} P\|=\sqrt{(P_1^o(\mx))^2+\cdots+(P_{d+1}^o(\mx))^2},$$
where $P_i^o$ is the $i$th element of $\mathtt{grad} P$.

Since
$$\frac{1}{\sqrt{d+1}}\sum_{i=1}^{d+1}|P^o_{i}(\mx_j)|\leq \sqrt{\sum_{i=1}^{d+1}(P_i^o(\mx_j))^2}\leq \sum_{i=1}^{d+1}|P^o_{i}(\mx_j)|,\quad j=1,\ldots,n,$$
by Theorem \ref{th:MZ}, we obtain
\begin{eqnarray*}
  \frac{|\mathbb{S}^d_+|}{n} \sum_{j=1}^{n}\|\mathtt{grad} P(\mx_j)\| &\leq&\frac{|\mathbb{S}^d_+|}{n} \sum_{j=1}^{n}\sum_{i=1}^{d+1}|P^o_{i}(\mx_j)|\leq (1+\mu)\sum_{i=1}^{d+1}\int_{\mathbb{S}^d_+}|P^o_i(\mx)|d\omega_d(\mx) \\
   &\leq&  (1+\mu)\sqrt{d+1}\int_{\mathbb{S}^d_+}\|\mathtt{grad} P(\mx)\|d\omega_d(\mx)\\
   &\leq&
    3\sqrt{d}\int_{\mathbb{S}^d_+}\|\mathtt{grad} P(\mx)\|d\omega_d(\mx),
\end{eqnarray*}
and
\begin{eqnarray*}
  \frac{|\mathbb{S}^d_+|}{n} \sum_{j=1}^{n}\|\mathtt{grad} P(\mx_j)\| &\geq&\frac{1}{\sqrt{d+1}}\frac{|\mathbb{S}^d_+|}{n} \sum_{j=1}^{n}\sum_{i=1}^{d+1}|P^o_{i}(\mx_j)|\\
  &\geq& \frac{1-\mu}{\sqrt{d+1}}\sum_{i=1}^{d+1}\int_{\mathbb{S}^d_+}|P^o_i(\mx)|d\omega_d(\mx)\\
   &\geq&  \frac{1-\mu}{\sqrt{d+1}}\int_{\mathbb{S}^d_+}\|\mathtt{grad} P(\mx)\|d\omega_d(\mx)\\
   &\geq&\frac{1}{3\sqrt{d}}\int_{\mathbb{S}^d_+}\|\mathtt{grad} P(\mx)\|d\omega_d(\mx).
\end{eqnarray*}
Thus, (\ref{MZgrad}) holds for any $P\in\mathbb{P}_t^e$.
Since for any $P\in\mathbb{P}_t^{*}$,   $\mathtt{grad} P\in\mathbb{P}_{t+1}^e$, by a similar proof, (\ref{MZgrad}) holds for any $P\in\mathbb{P}_t^{*}$.
The proof is completed.
\end{proof}

\section{Optimal asymptotic bounds for hemispherical $t$-designs}
In this section, we apply the Marcinkiewicz-Zygmund inequalities on $\mathbb{S}_+^d$ to establish
optimal asymptotic bounds for hemispherical $t$-designs.

We  define the set
\begin{equation}\label{def:omega}
\Omega:=\Bigg\{P \in\mathbb{P}_t^{e,*}:\frac{1}{|\mathbb{S}_+^d|}\int_{\mathbb{S}_+^d}\|\mathtt{grad} P(\mathbf{x})\|d\omega_d(\mathbf{x})<1 \Bigg\},
\end{equation}
where $\mathbb{P}_t^{e,*}:=\{P\in\mathbb{P}^e_t:\int_{\mathbb{S}^d_+}P(\mx)d\omega_d(\mx)=0\}$.

Following \cite{bondarenko2013},  let $U:\mathbb{P}_t^*\times\mathbb{S}^d\rightarrow \mathbb{S}^d$ be defined as follows
$$U(P,\mathbf{x})=
          \frac{\mathtt{grad} P(\mathbf{x})}{h_\epsilon(\|\mathtt{grad} P(\mathbf{x})\|)},
$$
where
 $ h_{\epsilon}(u)=\left\{\begin{array}{cl}
                           u & \mathrm{if}\, u>\epsilon \\
                           \epsilon & \mathrm{otherwise},
                         \end{array}\right.$
and $\epsilon=\frac{1}{6\sqrt{d}}$.

In the rest of this section,
let $n\geq C_{d} t^d$ be fixed for any $t\in\mathbb{N}$,  where $C_{d}>\big(\frac{54d c_{d}}{s_d}\big)^2$, $c_{d}$, $s_d$ are given in Theorem \ref{theo:eqb} and Corollary \ref{coro:MZP}, respectively.
Let $\mathcal{R}=\{R_1,\ldots,R_n\}$ be an equal area partition of  $\mathbb{S}^d_+$ with
$\|\mathcal{R}\|\leq c_{d} n^{-\frac{1}{d}}<\frac{s_d}{54dt}$.
Let $\mathbf{y}^0_i\in R_i$, $i=1,\ldots,n$ be arbitrarily.

For each $i=1,\ldots,n$, let $y_i:\mathbb{P}_t^*\times[0,\infty)\rightarrow\mathbb{S}^d$ be a mapping satisfying the following differential equation
\begin{equation}\label{eq:diff}
 \frac{d}{ds}y_i(P,s) = U(P,y_i(P,s)),
\end{equation}
with the initial condition
\begin{equation}\label{x0}
y_i(P,0)=\mathbf{y}^0_i,\quad \forall P\in\mathbb{P}_t^*.
\end{equation}

 Note that  $U(P,y)$ is Lipschitz continuous in both $P$ and $y$.
For each $i$, the ODE system (\ref{eq:diff})-(\ref{x0}) has a continuous solution, which is continuous in both $P$ and $s$.

Adopting the construction of a continuous mapping $F$ in \cite{bondarenko2013}, we prove the following lemma.
Firstly, let $T:\mathbb{S}^d\rightarrow\mathbb{S}_+^d$ be defined as
\begin{equation}\label{defT}
 T(\mx)=\left\{\begin{array}{cl}
             \mx & \mathrm{if}\ \mx\cdot\me_{d+1}\geq0 \\
             -\mx & \mathrm{if}\ \mx\cdot\me_{d+1}<0.
           \end{array}\right.
\end{equation}
Obviously, $P(T(\mx))=P(\mx)$, $\forall P\in\mathbb{P}_t^e$ and $\forall \mx\in\mathbb{S}^d$.

\begin{lemma}\label{lem:ineq}
Let $F:\mathbb{P}_t^*\rightarrow (\mathbb{S}^d_+)^n$ be the mapping given by
\begin{equation}\label{map:F}
 F(P):=\big(T(y_1(P,\tfrac{s_d}{3t})),\ldots,T(y_n(P,\tfrac{s_d}{3t}))\big).
\end{equation}
Then,
$\frac{1}{n}\sum_{i=1}^{n}P(T(y_i(P,\frac{s_d}{3t})))>0$ for any  $P\in \partial\Omega$.
\end{lemma}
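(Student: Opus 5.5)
Following the argument of Bondarenko--Radchenko--Viazovska, I would work with $P\in\partial\Omega$, so $P\in\mathbb{P}_t^{e,*}$ and $\frac{1}{|\mathbb{S}^d_+|}\int_{\mathbb{S}^d_+}\|\mathtt{grad}P\|\,d\omega_d=1$. Since $P$ is even, $P(T(\mx))=P(\mx)$ for every $\mx$. The quantity to bound from below is therefore $\frac1n\sum_i P(y_i(P,\tau))$ with $\tau=\frac{s_d}{3t}$. I would write
\[
P(y_i(P,\tau))-P(\my_i^0)=\int_0^\tau \frac{d}{ds}P(y_i(P,s))\,ds=\int_0^\tau \frac{\|\mathtt{grad}P(y_i(P,s))\|^2}{h_\epsilon(\|\mathtt{grad}P(y_i(P,s))\|)}\,ds ,
\]
which is nonnegative. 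The integrand is at least $\|\mathtt{grad}P(y_i)\|-\epsilon$, because it equals $u$ when $u>\epsilon$ and equals $u^2/\epsilon\ge u-\epsilon$ otherwise. This splits the sum into two parts.

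\emph{Initial term.} The sum $\frac1n\sum_iP(\my_i^0)$ should be compared with $\frac{1}{|\mathbb{S}^d_+|}\int_{\mathbb{S}^d_+}P=0$. I would use the mean value estimate $|P(\mx)-P(\my_i^0)|\le \diam R_i\cdot\sup_{R_i}\|\mathtt{grad}P\|$, or more carefully integrate along the geodesic. This gives
\[
\Big|\frac1n\sum_iP(\my_i^0)\Big|\le \frac{\|\mathcal R\|}{|\mathbb{S}^d_+|}\sum_i\int_{R_i}\sup_{R_i}\|\mathtt{grad}P\| .
\]
To control the sup, I would run the Marcinkiewicz--Zygmund inequality of Corollary~\ref{coro:MZP} with the points chosen to maximize $\|\mathtt{grad}P\|$ on each $R_i$, which is allowed because the corollary holds for every choice of $\mx_i\in R_i$. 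The bound becomes $3\sqrt d\,\|\mathcal R\|=O(1/(dt))$, using the normalisation that the mean of $\|\mathtt{grad}P\|$ equals $1$.

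\emph{Flow term.} I would show that $\frac1n\sum_i\|\mathtt{grad}P(y_i(P,s))\|\ge \frac{1}{3\sqrt d}$ for every $s\in[0,\tau]$. Each trajectory moves at speed at most $\|U\|\le 1$, so after time $s\le\tau=\frac{s_d}{3t}$ the point $y_i(P,s)$ lies within distance $\tau$ of $R_i$. Here lies the main obstacle: $y_i(P,s)$ may leave $R_i$, and even leave the hemisphere. To handle the first issue, I would fold the point back by $T$, using that $\|\mathtt{grad}P\|$ is even for even $P$. For the second, I would enlarge the regions: either apply the $\mathbb{L}_1$ inequality (Theorem~\ref{th:MZ}) to the gradient components as in Corollary~\ref{coro:MZP} with a partition norm replaced by $\|\mathcal R\|+2\tau$, or bound the difference $\|\mathtt{grad}P(y_i(s))\|-\|\mathtt{grad}P(\my_i^0)\|$ by $\tau$ times a second-derivative sup. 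That second-derivative sup would itself be controlled through a Bernstein inequality ((\ref{eq:W3}), of order $t$) and the MZ inequality again. Either route yields an average of at least $\frac{1}{3\sqrt d}-\frac{1}{3\cdot 2\sqrt d}$, roughly, once the constant in the hypothesis $\|\mathcal R\|<\frac{s_d}{54dt}$ is used. Subtracting $\epsilon=\frac{1}{6\sqrt d}$ leaves a positive constant multiple of $1/\sqrt d$.

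\emph{Combining.} The flow contributes at least $\tau\cdot c/\sqrt d=\frac{c\,s_d}{3t\sqrt d}$, and the initial error is at most $3\sqrt d\,\frac{s_d}{54dt}=\frac{s_d}{18\sqrt d\,t}$. The numerical constants ($54d$, $\epsilon=\frac1{6\sqrt d}$, $\tau=s_d/(3t)$) are tuned so that the first strictly dominates, which gives the claimed positivity. I expect the bookkeeping in the flow term, namely making the MZ lower bound survive the displacement of the points and the folding by $T$, to be the hardest step. I would attempt it first via the perturbed-partition version of Corollary~\ref{coro:MZP}.
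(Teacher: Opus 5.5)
Your outline is the paper's argument: evenness removes $T$, Newton--Leibniz splits off $\frac1n\sum_i P(\my_i^0)$, the inequality $u^2/h_\epsilon(u)\ge u-\epsilon$ handles the flow, and Corollary~\ref{coro:MZP} is applied to perturbed partitions $R_i\cup\{\text{point}\}$. Your ``first route'' in the flow term is exactly what the paper does. What fails is the bookkeeping, and this lemma has no slack.

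Since $\dist(\my_i^0,y_i(s))\le s$, the regions $R_i''=R_i\cup\{y_i(s)\}$ satisfy $\|\mathcal R''\|\le \|\mathcal R\|+s<\frac{s_d}{54dt}+\frac{s_d}{3t}<\frac{s_d}{t+1}$. So Corollary~\ref{coro:MZP} gives $\frac1n\sum_i\|\mathtt{grad}P(y_i(s))\|\ge\frac{1}{3\sqrt d}$ with no loss. The derivative is then at least $\frac{1}{3\sqrt d}-\epsilon=\frac{1}{6\sqrt d}$, and the flow gains at least $\frac{s_d}{3t}\cdot\frac{1}{6\sqrt d}=\frac{s_d}{18\sqrt d\,t}$. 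That is exactly the bound $3\sqrt d\,\frac{s_d}{54dt}=\frac{s_d}{18\sqrt d\,t}$ on the initial term, so nothing ``strictly dominates''. Positivity comes only from the strict hypothesis $\|\mathcal R\|<\frac{s_d}{54dt}$.

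Hence every loss you allow is fatal:
\begin{itemize}
\item An average of $\frac{1}{3\sqrt d}-\frac{1}{6\sqrt d}$ minus $\epsilon=\frac{1}{6\sqrt d}$ leaves $0$, not a positive multiple of $1/\sqrt d$.
\item Any route that loses a positive amount relative to $\frac{1}{3\sqrt d}$, such as your second-derivative/Bernstein route, cannot close with $\epsilon=\frac{1}{6\sqrt d}$ and $\tau=\frac{s_d}{3t}$.
\item With your enlarged norm $\|\mathcal R\|+2\tau$, the hypothesis of Corollary~\ref{coro:MZP} is not guaranteed for $t\le 2$. For $t=1$ the bound exceeds $\frac{2s_d}{3}>\frac{s_d}{2}$. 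The correct bound is $\|\mathcal R\|+s$.
\end{itemize}

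Two smaller repairs are also needed.

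\emph{Initial term.} Using $\sup_{R_i}\|\mathtt{grad}P\|$ is not justified, because the zonal regions need not be geodesically convex. Two points on a colatitude circle bounding $R_i$ from above are joined by a geodesic that bulges poleward out of $R_i$. Take instead the maximum over $\{\mz\in\mathbb{S}^d_+:\dist(\mz,\my_i^0)\le\|\mathcal R\|\}$. This set contains the geodesic from $\my_i^0$ to any $\mx\in R_i$, since the closed hemisphere is geodesically convex. Then append the maximiser $\bar\mz_i$ to $R_i$. The new partition has norm at most $2\|\mathcal R\|$, and Corollary~\ref{coro:MZP} gives your bound $3\sqrt d\,\|\mathcal R\|$.

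\emph{Trajectories crossing the equator.} Folding by $T$ does not handle these. $T$ sends a point just below the equator near $R_i$ to a point near $-R_i$, so $R_i\cup\{T(y_i(s))\}$ has diameter close to $\pi$. (You also paired $T$ with ``leaving $R_i$'' and enlargement with ``leaving the hemisphere''; it should be the other way round.) Keep $y_i(s)$ itself. The proof of Theorem~\ref{th:MZ} never uses $\mx_i\in\mathbb{S}^d_+$: the kernel $V_t$ reproduces on all of $\mathbb{S}^d$, and the band argument of Lemma~\ref{pro:bound} can be run over $[0,\pi]$. So Corollary~\ref{coro:MZP} still applies to $R_i\cup\{y_i(s)\}$; the paper uses this tacitly.
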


\begin{proof}
We  fix $P\in \partial\Omega=\{P \in\mathbb{P}_t^{e,*}:\frac{1}{|\mathbb{S}_+^d|}\int_{\mathbb{S}^d_+}\|\mathtt{grad} P(\mathbf{x})\|d\omega_d(\mathbf{x})=1 \}$.
For notational simplicity, we write $y_i(s)$ in place of $y_{i}(P,s)$, $i=1,\ldots,n.$

Since $P\in\mathbb{P}_t^e$, by (\ref{defT}) we have
$$P(T(y_i(\tfrac{s_d}{3t})))=P(y_i(\tfrac{s_d}{3t})),\quad i=1,\ldots,n.$$
By the Newton-Leibniz formula, we obtain
\begin{equation}\label{NL}
\frac{1}{n}\sum_{i=1}^{n}P(T(y_i(\tfrac{s_d}{3t})))=\frac{1}{n}\sum_{i=1}^{n}P(y_i(\tfrac{s_d}{3t}))
 = \frac{1}{n}\sum_{i=1}^{n}P(\mathbf{y}^0_i)+\int_{0}^{\frac{s_d}{3t}} \frac{d}{ds}\Big(\frac{1}{n}\sum_{i=1}^{n}P(y_i(s))\Big)ds.
\end{equation}
In the following, we estimate the first and second terms on the right-hand side of equation (\ref{NL}), respectively.

(i) We first observe that
\begin{eqnarray*}
      \bigg|\frac{1}{n}\sum_{i=1}^{n}P(\mathbf{y}^0_i)\bigg|
      & =&\bigg|\frac{1}{|\mathbb{S}^d_+|}\sum_{i=1}^{n}\int_{R_i}P(\mathbf{y}^0_i)-P(\mathbf{y})d\omega_d(\mathbf{y})\bigg|\\
      &\leq& \frac{1}{|\mathbb{S}^d_+|}\sum_{i=1}^{n}\int_{R_i}|P(\mathbf{y}^0_i)-P(\mathbf{y})|d\omega_d(\mathbf{y})\\
      &\leq& \frac{\|\mathcal{R}\|}{n}\sum_{i=1}^{n}\max_{\{\mathbf{z}_i\in \mathbb{S}^d_+:\dist(\mathbf{z}_i,\mathbf{y}^0_i)\leq \|\mathcal{R}\|\}}\|\mathtt{grad} P(\mathbf{z}_i)\|,
\end{eqnarray*}
where the first equality follows from $\int_{\mathbb{S}^d_+}P(\mathbf{y})d\omega_d(\mathbf{y})=0$.
Hence, for $\bar{\mathbf{z}}_i\in \mathbb{S}^d_+$ such that $\dist(\bar{\mathbf{z}}_i,\mathbf{y}^0_i)\leq \|\mathcal{R}\|$ and
$$\|\mathtt{grad} P(\bar{\mathbf{z}}_i)\|=\max_{\{\mathbf{z}_i\in\mathbb{S}^d_+:\dist(\mathbf{z}_i,\mathbf{y}^0_i)\leq \|\mathcal{R}\|\}}\|\mathtt{grad} P(\mathbf{z}_i)\|,$$
we obtain
$$\bigg|\frac{1}{n}\sum_{i=1}^{n}P(\mathbf{y}^0_i)\bigg|\leq \frac{\|\mathcal{R}\|}{n}\sum_{i=1}^{n}\|\mathtt{grad} P(\bar{\mathbf{z}}_i)\|.$$

Let $\mathcal{R}'=\{R_1',\ldots,R'_n\}$ with $R_i'=R_i\cup\{\bar{\mathbf{z}}_i\}$ be another equal area partition.
Clearly, $\|\mathcal{R}'\|\leq2\|\mathcal{R}\|$ and we obtain $\|\mathcal{R}'\|\leq \frac{s_d}{27dt}$.
By Corollary \ref{coro:MZP}, we obtain
\begin{equation}\label{ineq1}
  \bigg|\frac{1}{n}\sum_{i=1}^{n}P(\mathbf{y}^0_i)\bigg|<  \|\mathcal{R}\|\frac{3\sqrt{d}}{|\mathbb{S}^d_+|}\int_{\mathbb{S}_+^d}\|\mathtt{grad} P(\mathbf{y})\|d\omega_d(\mathbf{y})<\frac{ s_d}{18\sqrt{d}t},\quad \forall P\in\partial\Omega.
\end{equation}

(ii)
By (\ref{eq:diff}), we have
\begin{equation}\label{indf3}
\frac{d}{ds}\Big(\frac{1}{n}\sum_{i=1}^{n}P(y_i(s))\Big)
\geq
 \frac{1}{n}\sum_{i=1}^{n}\frac{\|\mathtt{grad} P(y_i(s))\|^2}{h_\epsilon(\|\mathtt{grad} P(y_i(s))\|)}\nonumber\\
 \geq \frac{1}{n}\sum_{i=1}^{n}\|\mathtt{grad} P(y_i(s))\|-\epsilon.
\end{equation}

Since $\|U(P,\mathbf{y})\|\leq 1$ for each $\mathbf{y}\in\mathbb{S}^d$,  by (\ref{eq:diff}) we have
$\|\frac{d}{ds}y_i(s)\|\leq1$ for every $s$.
Hence, we obtain $\dist(\mathbf{y}^0_i,y_i(s))\leq s$, $i=1,\ldots,n.$

Now for $s\in[0,\frac{s_d}{3t}]$, let $\mathcal{R}''=\{R''_1,\ldots,R''_n\}$ with $R''_i=R_i\cup\{y_i(s)\}$.
Then $$\|\mathcal{R}''\|\leq \frac{s_d}{54dt}+\frac{s_d}{3t}.$$
By Corollary \ref{coro:MZP} and (\ref{indf3}), for any $P\in\partial\Omega$ and  every $s\in[0,\frac{s_d}{3t}],$ we have
\begin{eqnarray}
  \frac{d}{ds}\Big(\frac{1}{n}\sum_{i=1}^{n}P(y_i(s))\Big)
  & \geq&\frac{1}{n}\sum_{i=1}^n\|\mathtt{grad} P(y_i(s))\|-\epsilon  \nonumber\\
   &>&  \frac{1}{3\sqrt{d}}\frac{1}{|\mathbb{S}^d_+|}\int_{\mathbb{S}_+^2}\| \mathtt{grad} P(y(s))\|d\omega_d(\mathbf{x})- \frac{1}{6\sqrt{d}} \nonumber\\
    &>& \frac{1}{6\sqrt{d}}.\label{indf2}
\end{eqnarray}

Finally, (\ref{NL}), (\ref{ineq1}) and (\ref{indf2}) imply
\begin{equation*}
\frac{1}{n}\sum_{i=1}^{n}P(T(y_i(\tfrac{s_\theta}{3t})))=\frac{1}{n}\sum_{i=1}^{n}P(y_i(\tfrac{s_\theta}{3t}))
  >\frac{1}{6\sqrt d}\frac{s_d}{3t}-\frac{ s_d}{18\sqrt{d}t}
  =0.
\end{equation*}
The proof is completed.
\end{proof}

Now, we prove the asymptotic bounds for hemispherical $t$-designs.
\begin{theorem}
 Let $F$ and $\Omega$ be defined as  (\ref{map:F}) and (\ref{def:omega}), respectively.
  Then, there is $\bar{P}\in\mathbb{P}_t^*$ such that $(T(y_1(\bar{P},\frac{s_d}{3t})),\ldots,T(y_n(\bar{P},\frac{s_d}{3t})))$ forms a hemispherical $t$-design on $\mathbb{S}^d_+$.
\end{theorem}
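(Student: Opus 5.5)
We plan to apply Theorem~\ref{lemm:mapp} (the Brouwer degree result) to a map on the finite-dimensional space of polynomials, following the strategy of \cite{bondarenko2013}.

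\textbf{Step 1 (the space and the map).} Work in $\mathbb{P}_t^{e,*}$ with the inner product $\langle P,Q\rangle=\int_{\mathbb{S}^d_+}PQ\,d\omega_d$; fixing an orthonormal basis identifies it with $\mathbb{R}^N$. The set $\Omega$ in (\ref{def:omega}) is open and contains $0$, since $P\mapsto\int_{\mathbb{S}^d_+}\|\mathtt{grad}P\|\,d\omega_d$ is continuous. It is bounded because this functional is a norm on $\mathbb{P}_t^{e,*}$: if $\mathtt{grad}P\equiv0$ on $\mathbb{S}^d_+$, then $P$ is constant, and a constant of zero mean vanishes. Equivalence of norms in finite dimension then gives boundedness. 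Define
\begin{equation*}
f(P):=\sum_{i=1}^n Q^{e}_{T(y_i(P,\frac{s_d}{3t}))}\in\mathbb{P}_t^{e,*},
\end{equation*}
where for $\mx_0\in\mathbb{S}^d_+$ the polynomial $Q^e_{\mx_0}$ is the reproducing element of $\mathbb{P}_t^{e,*}$. Concretely, $Q^e_{\mx_0}$ is the orthogonal projection of $Q_{\mx_0}$ from (\ref{poly:Q}) onto $\mathbb{P}_t^{e,*}$, so that $\langle Q^e_{\mx_0},P\rangle=P(\mx_0)$ for all $P\in\mathbb{P}_t^{e,*}$ by (\ref{poly:Q1}). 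Continuity of $f$ follows from the continuity of $P\mapsto y_i(P,s)$, of $T$ away from the equator (and of $P\circ T$ in all cases), and of $\mx_0\mapsto Q^e_{\mx_0}$. Then
\begin{equation*}
\langle P,f(P)\rangle=\sum_{i=1}^n P\big(T(y_i(P,\tfrac{s_d}{3t}))\big),
\end{equation*}
which is positive on $\partial\Omega$ by Lemma~\ref{lem:ineq}.

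\textbf{Step 2 (a zero of $f$).} Theorem~\ref{lemm:mapp} gives $\bar P\in\Omega$ with $f(\bar P)=0$, i.e.
\begin{equation*}
\sum_{i=1}^n P\big(T(y_i(\bar P,\tfrac{s_d}{3t}))\big)=0\quad\text{for all }P\in\mathbb{P}_t^{e,*}.
\end{equation*}
So the point set is exact for all even polynomials of degree $\le t$.

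\textbf{Step 3 (from even exactness to all of $\mathbb{P}_t^*$).} This is the main obstacle. Corollary~\ref{coro:con} requires exactness on all of $\mathbb{P}_t^*$, including odd polynomials. Polynomials on $\mathbb{S}^d_+$ are not split orthogonally by parity, so a degree-$t$ polynomial restricted to the hemisphere need not agree with an even polynomial of degree $t$. Two routes seem possible.

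First route: pass to degree $2t$. Since every $P\in\mathbb{P}_t$ restricted to $\mathbb{S}^d_+$ is a polynomial in $x_1,\dots,x_{d+1}$, and $x_{d+1}=\sqrt{1-\|\bar{\mx}\|^2}$ is not polynomial, even exactness alone does not suffice. The natural fix is to enlarge the construction to the full space $\mathbb{P}_t^*$ with $f(P)=\sum_i Q_{T(y_i)}$, repeating Lemma~\ref{lem:ineq} for general $P$. The difficulty is that $P(T(\mx))\ne P(\mx)$ for non-even $P$, so the flow must remain in $\mathbb{S}^d_+$ (or near it).

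Second route (preferred): choose the initial points $\mathbf{y}^0_i$ at distance at least $s_d/(3t)$ from the equator, where possible, so that $T$ acts as the identity along the flow. Near the equator, accept a boundary-layer error controlled by Corollary~\ref{coro:MZP}, which covers both parities. Then rerun the estimates (i)--(ii) of Lemma~\ref{lem:ineq} for all $P\in\partial\Omega$ defined over $\mathbb{P}_t^*$.

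If this goes through, Theorem~\ref{lemm:mapp} applied on $\mathbb{P}_t^*$ yields $\sum_i Q_{\mx_i}=0$, and (\ref{iffQ}) shows the points form a hemispherical $t$-design.
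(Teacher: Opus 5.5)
\textbf{The gap is Step~3, and it cannot be closed.} You leave Step~3 conditional (``if this goes through''), and neither route works.

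What Steps 1--2 deliver is $\sum_i P(\mx_i)=0$ for $P\in\mathbb{P}_t^{e,*}$, that is, exactness on even polynomials only. Even polynomials have the same mean on $\mathbb{S}^d_+$ as on $\mathbb{S}^d$. So this exactness says exactly that $\mathcal{X}\cup(-\mathcal{X})$ is an antipodal spherical $t$-design, and it says nothing about, e.g., $x_{d+1}$.

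Route 1 fails at once. For non-even $P$ you have $\langle P,f(P)\rangle=\sum_iP(T(y_i))\neq\sum_iP(y_i)$. Moreover, Theorem~\ref{th:MZ} and Corollary~\ref{coro:MZP} (hence Lemma~\ref{lem:ineq}) are only available for pure-parity polynomials, since their proof uses $\int_{\mathbb{S}^d}|P|=2\int_{\mathbb{S}^d_+}|P|$.

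Route 2 is incompatible with $\mathbf{y}_i^0\in R_i$. A fraction $\asymp 1/t$ of the cells of a partition with $\|\mathcal{R}\|<s_d/(54dt)$ lies inside the band of width $s_d/(3t)$ at the equator. The resulting boundary-layer error is not small either: a degree-$t$ polynomial in $x_{d+1}$ alone can put most of $\int_{\mathbb{S}^d_+}\|\mathtt{grad}P\|\,d\omega_d$ within $O(t^{-2})$ of the equator.

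The paper's own proof has the same hole, unacknowledged. It takes $\Omega\subset\mathbb{P}_t^{e,*}$ but applies Theorem~\ref{lemm:mapp} to $f=\bar U\circ F$ on $\mathbb{P}_t^*$. There $\Omega$ has empty interior, so $0\in\partial\Omega$ and $\langle 0,f(0)\rangle=0$. And Lemma~\ref{lem:ineq} is proved only for even $P$.

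\textbf{Why no repair is possible: the statement is false for $n\asymp t^d$.} Here is a sketch of a lower bound $n\ge c_dt^{d+1}$.
\begin{enumerate}
\item Write $\mx=(\sqrt{1-u^2}\,\mathbf{z},u)$ with $\mathbf{z}\in\mathbb{S}^{d-1}$, so that $d\omega_d=w(u)\,du\,d\omega_{d-1}(\mathbf{z})$ with $w(u)=(1-u^2)^{(d-2)/2}$.
\item Let $m=\lfloor t/2\rfloor$, let $\pi_m$ be the $m$th orthogonal polynomial for $w$ on $[0,1]$, and let $\xi\asymp m^{-2}$ be its smallest zero.
\item A hemispherical $t$-design is exact on $\pi_m(x_{d+1})^2/(x_{d+1}-\xi)$, whose integral is $0$, and on $\pi_m(x_{d+1})^2$, whose integral is positive. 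Together these force a node $\mx_j$ with $\mx_j\cdot\me_{d+1}<\xi$.
\item Exactness on $P^2$, for $P\in\mathbb{P}_m$ with $P(\mx_j)=1$, gives $|\mathbb{S}^d_+|/n\le\int_{\mathbb{S}^d_+}P^2\,d\omega_d$.
\item Take $P=p(x_{d+1})\,q(x_1,\ldots,x_d)$ with $\deg p,\deg q\le m/2$.
\begin{itemize}
\item Let $p$ be the Christoffel minimizer for $w$ at $\mx_j\cdot\me_{d+1}$. Then $\int_0^1p^2w\,du\lesssim t^{-2}$, because the node lies within $O(t^{-2})$ of the endpoint $0$ and $w(0)=1$.
\item Let $q$ be a needle polynomial on $\mathbb{S}^{d-1}$ at the direction of $\mx_j$, written as a sum of solid harmonics with nonnegative zonal coefficients and normalized so that $q(x_{j,1},\ldots,x_{j,d})=1$. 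Then $\int_{\mathbb{S}^{d-1}}q(\rho\mathbf{z})^2\,d\omega_{d-1}(\mathbf{z})\lesssim t^{1-d}$ for all $\rho\in[0,1]$.
\end{itemize}
\item Hence $\int_{\mathbb{S}^d_+}P^2\,d\omega_d\lesssim t^{-(d+1)}$, and every hemispherical $t$-design has $n\ge c_dt^{d+1}$ points. For $d=2$ this matches the $O(t^3)$ cap and zone designs cited in the introduction.
\end{enumerate}

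So for large $t$ no hemispherical $t$-design with $n\approx C_dt^d$ points exists, and no choice of flow or initial points can rescue Step~3. What your Steps 1--2 can honestly deliver is the even (antipodal) statement.
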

\begin{proof}
Let $\bar{U}:(\mathbb{S}^d_+)^n\rightarrow \mathbb{P}^*_t$ be a mapping defined by
$$\bar{U}(\mathbf{y}_1,\ldots,\mathbf{y}_n)= Q_{\mathbf{y}_1}+\ldots+Q_{\mathbf{y}_n},$$
where $Q_{\mathbf{y}_i}$ are polynomials of the form  (\ref{poly:Q}),
and let $f=\bar{U}\circ F:\mathbb{P}_t^*\rightarrow\mathbb{P}_t^*$.
By (\ref{poly:Q1}) and Lemma \ref{lem:ineq},
$$\langle P,f(P) \rangle=\sum_{i=1}^{n}\langle P,Q_{T(y_i(P,\frac{s_d}{3t}))} \rangle=\sum_{i=1}^{n}P(T(y_i(P,\tfrac{s_d}{3t})))>0,\quad \forall P\in \partial\Omega.$$
Thus by Theorem  \ref{lemm:mapp}, there is $\bar{P}\in\Omega\subseteq\mathbb{P}_t^*$ such that $f(\bar{P})=\sum_{i=1}^{n}Q_{T(y_i(\bar{P},\frac{s_d}{3t}))}=0$.
Furthermore, by (\ref{poly:Q1}) we have
$$\langle P, f(\bar{P})\rangle
=\sum_{i=1}^{n}\langle P,Q_{T(y_i(\bar{P},\frac{s_d}{3t}))} \rangle=\sum_{i=1}^{n}P(T(y_i(\bar{P},\tfrac{s_d}{3t})))=0.
\quad \forall P\in\mathbb{P}_t^*.$$
By Corollary \ref{coro:con},  $(T(y_1(\bar{P},\frac{s_d}{3t})),\ldots,T(y_n(\bar{P},\tfrac{s_d}{3t})))$ forms a hemispherical $t$-design on $\mathbb{S}^d_+$.
The proof is completed.
\end{proof}

\section{Examples}
In this example, we first give  a hemispherical 3-design $\mathcal{X}_8$ with 8 points.
Specifically,
\begin{gather*}
\mathcal{X}_8=\left\{\left[
   \begin{array}{c}
    \sqrt{\frac{4-\sqrt{3}}{6}} \\
    0  \\
     \frac{\sqrt{3}+3}{6}\\
   \end{array}
 \right],
\left[
   \begin{array}{c}
    0 \\
    \sqrt{\frac{4-\sqrt{3}}{6}}  \\
     \frac{\sqrt{3}+3}{6} \\
   \end{array}
 \right]
,
\left[
   \begin{array}{c}
    -\sqrt{\frac{4-\sqrt{3}}{6}} \\
    0 \\
     \frac{\sqrt{3}+3}{6} \\
   \end{array}
 \right]
,
\left[
   \begin{array}{c}
    0 \\
    -\sqrt{\frac{4-\sqrt{3}}{6}}  \\
     \frac{\sqrt{3}+3}{6} \\
   \end{array}
 \right]
 ,\right.
 \\ \qquad \qquad \qquad
 \left.
 \left[
   \begin{array}{c}
    \sqrt{\frac{4+\sqrt{3}}{12}} \\
    \sqrt{\frac{4+\sqrt{3}}{12}} \\
     \frac{3-\sqrt{3}}{6}\\
   \end{array}
 \right],
\left[
   \begin{array}{c}
    -\sqrt{\frac{4+\sqrt{3}}{12}} \\
    \sqrt{\frac{4+\sqrt{3}}{12}}  \\
     \frac{3-\sqrt{3}}{6} \\
   \end{array}
 \right]
,
\left[
   \begin{array}{c}
    -\sqrt{\frac{4+\sqrt{3}}{12}} \\
    -\sqrt{\frac{4+\sqrt{3}}{12}}  \\
     \frac{3-\sqrt{3}}{6} \\
   \end{array}
 \right],
 \left[
   \begin{array}{c}
    \sqrt{\frac{4+\sqrt{3}}{12}} \\
    -\sqrt{\frac{4+\sqrt{3}}{12}}  \\
     \frac{3-\sqrt{3}}{6} \\
   \end{array}
 \right]
\right\}.
\end{gather*}
We show $\mathcal{X}_8$ in Fig. \ref{fii}.
Let $R\mathcal{X}_8$ be the hemispherical 3-design obtained by the rotation
$\mathbf{R}=\left[
   \begin{array}{ccc}
    \frac{\sqrt{2}}{2} & -\frac{\sqrt{2}}{2} & 0 \\
    \frac{\sqrt{2}}{2} & \frac{\sqrt{2}}{2} & 0  \\
     0& 0& 1 \\
   \end{array}
 \right]$ of $\mathcal{X}_8$.
We observe that the set $\mathcal{X}_{16}=R\mathcal{X}_8 \cup \mathcal{X}_8$ is also a hemispherical 3-design.
The number of points in $\mathcal{X}_{16}$ is consistent with $(t+1)^2$ in \cite{chen2011} with $t=3$.
We show $\mathcal{X}_{16}$ in Fig. \ref{fii}.

Since the hemisphere is rotationally invariant about the $z$-axis.
Thus, the set obtained by  any rotation of $\mathcal{X}_8$ or $\mathcal{X}_{16}$ about the $z$-axis is  a hemispherical 3-design.

\begin{figure}[h]
  \centering
  \includegraphics[width=5.cm]{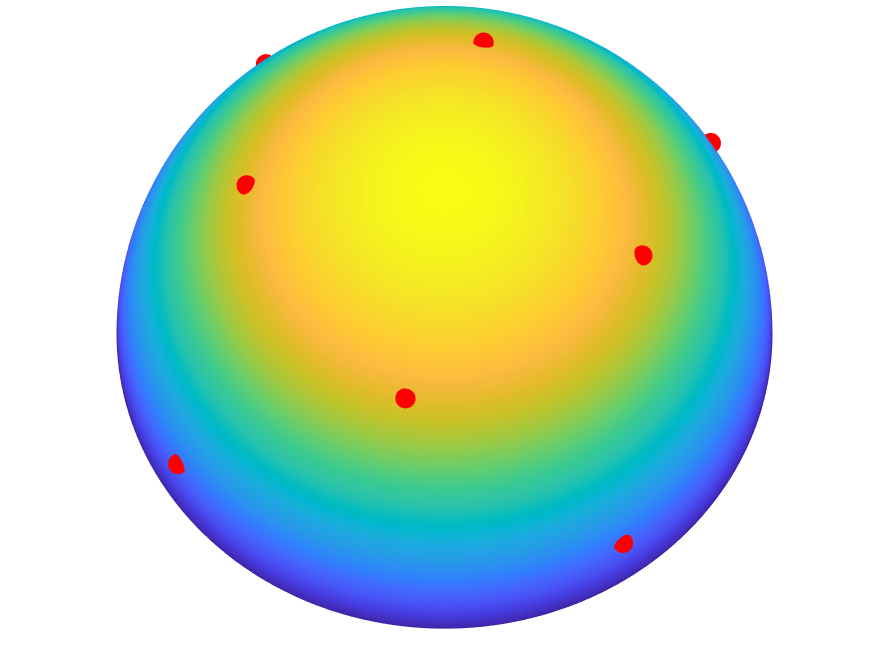}\qquad
  \includegraphics[width=5.cm]{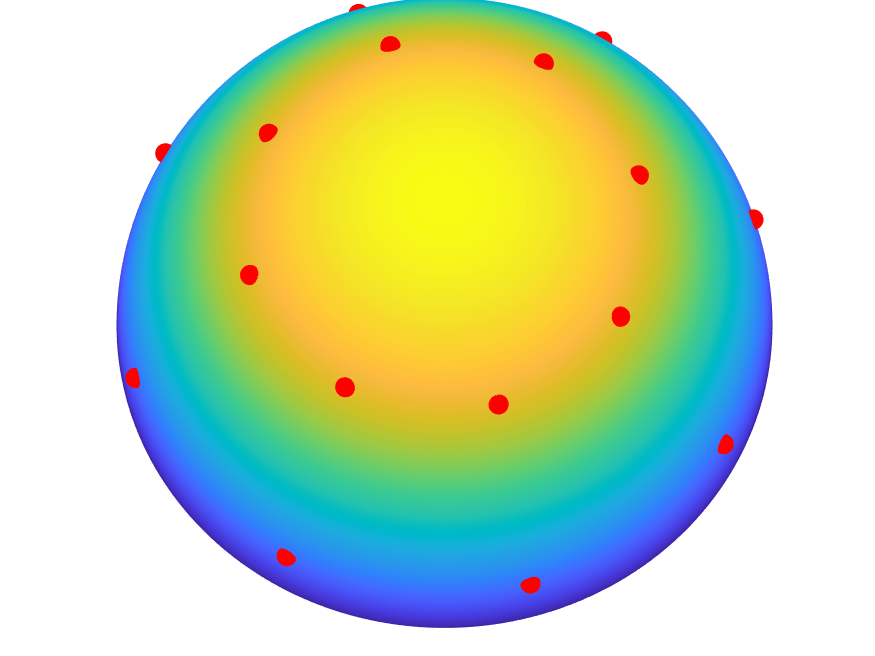}
  \caption{Hemispherical 3-design with 8 points (left) and 16 points (right).}\label{fii}
\end{figure}

\section{Conclusion}
In this work, we study hemispherical $t$-designs and $\mathbb{L}_1$ Marcinkiewicz-Zygmund inequalities on the hemisphere.
Firstly, we  define a set of new spherical polynomials $\{B^d_{\ell,k}:k=1,\ldots,2\ell+1,\ell=0,1,\ldots,t\}$ on $\mathbb{S}^d_+$ and  present an equivalent condition (\ref{con}) for   hemispherical $t$-designs.
Next, we apply the recursive zonal equal area partition method  \cite{leopardi2006} to show that there is a recursive zonal equal area partition of the hemisphere with bounded partition norm.
Then, we prove the  $\mathbb{L}_1$ Marcinkiewicz-Zygmund inequalities on the hemisphere for even and odd spherical polynomials.
Finally, we apply the $\mathbb{L}_1$ Marcinkiewicz-Zygmund inequalities and the new set of polynomials on the hemisphere to  establish asymptotic bounds for hemispherical $t$-designs.

\bibliographystyle{spmpsci}      
\bibliography{sn-bibliography}

\end{document}